\documentclass[11pt]{article}
\usepackage{hyperref}
\usepackage{mathtools}
\usepackage{dsfont}
\usepackage{verbatim}
\mathtoolsset{showonlyrefs}
\usepackage{mathrsfs}
\usepackage{todonotes}

\usepackage{geometry}                
\usepackage{graphicx}
\usepackage{amssymb,amsmath,amsthm}
\usepackage{epstopdf}
\usepackage{enumerate}
\usepackage{enumitem}
\usepackage[english]{babel}
\usepackage{esint}
\usepackage{subcaption}
\usepackage{tikz}
\usepackage{xcolor}
\usepackage{pgfplots}
\usetikzlibrary{calc}
\usetikzlibrary{calc, arrows.meta}
\newcommand{\LM}[2]{\hbox{\vrule width.4pt \vbox to#1pt{\vfill
\hrule width#2pt height.4pt}}}
\newcommand{\LLL}{{\mathchoice {\>\LM{7}{5}\>}{\>\LM{7}{5}\>}{\,\LM{5}{3.5}\,}{\,\LM{3.35}{2.5}\,}}}

\newtheorem{theorem}{Theorem}[section]

\newtheorem{lemma}[theorem]{Lemma}
\newtheorem{proposition}[theorem]{Proposition}

\theoremstyle{definition}

\newtheorem{remark}[theorem]{Remark}

\def\R{\mathbb{R}}
\def\Z{\mathbb{Z}}

\def\1{\mathds{1}}
\def\N{\ensuremath{\mathbb{N}}}

\let\phi\varphi
\let\epsilon\varepsilon

\DeclarePairedDelimiterX{\inp}[2]{\langle }{\rangle }{#1 \, , #2}

\newcommand{\interior}[1]{%
  {\kern0pt#1}^{\mathrm{o}}%
}

\newcommand{\glim}{\mathop{\Gamma\text{-}\lim}\limits}

\def\R{\mathbb{R}}
\def\Z{\mathbb{Z}}

\def\1{\mathds{1}}
\def\N{\ensuremath{\mathbb{N}}}

\title{Homogenization and averaging in the Bour\-gain--Brezis--Mironescu limit}
\author{Andrea Braides\\
\small Department of Mathematics, University of Rome Tor Vergata\\
\small via della ricerca scientifica 1, 00133 Rome, Italy
\\ \\
 Stefano Mannella \ and \ Alec Jacopo Almo Schiavoni Piazza
 \\
 \small SISSA, via Bonomea 265, 34146 Trieste, Italy
 }
\date{}      

\begin{document}
\maketitle

\begin{abstract}
We study the simultaneous asymptotic behaviour of homogenization and the Bour\-gain--Brezis--Mironescu limit for quadratic fractional energies with periodic coefficients. The functionals depend on two small parameters: the period of the coefficients and the fractional exponent approaching the local limit. We prove that the $\Gamma$-limit is completely determined by the relative scaling of these parameters. In the two extreme regimes, one recovers the expected separation of scales, yielding either the homogenized Dirichlet energy or the energy with averaged coefficients. At the critical logarithmic scaling, however, the two mechanisms coexist, and the limit is a convex combination of the homogenized and averaged energies, with weights depending explicitly on the relative scaling of the two asymptotic processes. The proof combines the homogenization analysis of the short-range interactions with a blow-up argument showing that long-range interactions are simply averaged using a Riemann--Lebesgue-type argument.

\smallskip{\bf Keywords:} homogenization; Bourgain--Brezis--Mironescu formula; fractional Sobo\-lev spaces; nonlocal functionals; Gagliardo seminorm; averaging.

\smallskip{
\bf MSC 2020:} 49J45, 35B27, 
35R11, 74Q05
\end{abstract}

\section{Introduction}
In this work, we study the interaction between two fundamental asymptotic processes in the Calculus of Variations: homogenization and the Bourgain--Brezis--Mironescu limit for fractional Sobolev energies. Although each of these limits is now well understood, their simultaneous behaviour exhibits a genuinely new phenomenon, whose nature depends on the relative scaling between the microscopic periodicity and the fractional parameter.

The Bourgain--Brezis--Mironescu theorem \cite{BBM,ponce} characterizes Sobolev spaces through the asymptotic behaviour of suitable nonlocal energies. In the quadratic case, the normalized Gagliardo seminorms
\[
(1-s)\int_\Omega\int_\Omega \frac{|u(x)-u(y)|^2}{|x-y|^{d+2s}}dxdy,
\]
$\Gamma$-converge, as $s\to 1^-$, to a multiple of the Dirichlet energy. More precisely, the limit is $\frac{\sigma_{d-1}}{2d}$ times the Dirichlet energy, where $\sigma_{d-1}$ is the $d-1$-surface measure of the unit sphere in $\mathbb R^d$. Thus, the result provides a variational characterization of  $H^1(\Omega)$. This result extends to $W^{
s,p}$
 spaces and to a broad class of kernels \cite{Davo,GS}, and has found numerous applications in nonlocal variational problems (see e.g.~\cite{ADP,SV,PV,Solci}). 

 Although many extensions of the Bourgain--Brezis--Mironescu theorem are now available, much less is known about its interaction with other singular limits involving microscopic scales. In particular, a complete description of its interplay with homogenization is not available. Homogenization and the Bourgain--Brezis--Mironescu limit both replace a complicated energy by an effective local one, but through completely different mechanisms. The former accounts for microscopic oscillations of the coefficients, whereas the latter concentrates the interaction kernel at infinitesimal scales. 
 The question is then how these two asymptotic mechanisms interact when they take place simultaneously. As we show, neither mechanism dominates, in general.

For the reader's convenience, we briefly recall the variational formulation of periodic homogenization. This is a prototypical example of the nontrivial effects of oscillations in differential equations and the Calculus of Variations. In the simplest deterministic periodic (isotropic and quadratic) variational form, it consists of studying the asymptotic behaviour of minimum problems involving integrals 
\[
\int_\Omega a\left(\frac{x}\epsilon\right)|\nabla u|^2dx
\]
as $\epsilon\to 0$.
The coefficient $a$ is $1$-periodic, bounded, and bounded away from zero.
This analysis was one of the examples that led De Giorgi to the definition of $\Gamma$-convergence \cite{DG-S}. In his terminology, the $\Gamma$-limit in the $L^2$-topology is given by the functional
\[
\int_\Omega \langle A_{\rm hom}\nabla u, \nabla u\rangle dx.
\]
The symmetric {\em homogenized matrix} is characterized by a cell-problem formula
\[
\langle A_{\rm hom}\xi, \xi\rangle=\min\Big\{ \int_{(0,1)^d}a(y)|\xi+\nabla \phi|^2dy: \phi \ 1\hbox{-periodic}\Big\}.
\]
A simple but important observation is that $A_{\rm hom}$ depends on the ``geometry'' of $a$, and in particular it is not equal to the average of $a$: that is, to $\overline a=\int_{(0,1)^d}a(y)dy$,
except in the trivial case where $a$ is constant.
This result may also be viewed as a consequence of previous results on the $G$-convergence and $H$-convergence of elliptic differential operators \cite{Spa,Mu-Ta,Cio-Don}, and has been generalized in many ways (for a general reference from the $\Gamma$-convergence standpoint, see \cite{BDF}). 

Following this variational approach to homogenization, in the framework of fractional Sobolev spaces, we therefore consider quadratic energies with oscillating coefficients, analogous to the $H^1$-case, of the form 
\[
(1-s)\int_\Omega\int_\Omega a\left(\frac{x}\epsilon,\frac{y}\epsilon\right)
\frac{|u(x)-u(y)|^2}{|x-y|^{d+2s}}dxdy,
\]
where now the coefficient $a$ is $1$-periodic in both variables. Here we have two parameters $s$, tending to $1$, and $\epsilon$, tending to $0$. Formally, letting $\epsilon$ be fixed and $s$ tend to $1$, we can apply the Bourgain--Brezis--Mironescu result, and obtain a limit of the form
\[
\frac{\sigma_{d-1}}{2d}\int_\Omega a\left(\frac{x}\epsilon,\frac{x}\epsilon\right)|\nabla u|^2dx,
\]
where, due to the singularity of the kernel, only the values of $a$ on the diagonal $x=y$ contribute. We can then apply the classical theory of elliptic homogenization. The limit is given by $\frac{\sigma_{d-1}}{2d} F_{\rm hom}$ with
\begin{equation}\label{fom}
F_{\rm hom}(u)= 
\int_\Omega \langle A_{\rm hom}\nabla u, \nabla u\rangle dx,
\end{equation}
where now $A_{\rm hom}$ satisfies
\begin{equation}\label{fomi}
\langle A_{\rm hom}\xi, \xi\rangle=\min\Big\{ \int_{(0,1)^d}a(y,y)|\xi+\nabla \phi|^2dy: \phi \ 1\hbox{-periodic}\Big\}.
\end{equation}
This description continues to hold if $1-s$ tends to $0$ sufficiently fast compared with $\epsilon$, as follows from the general theory of $\Gamma$-convergence \cite{DM,B-LN14}. However, it cannot hold for an arbitrary convergence of $\epsilon \to 0$ and $s\to 1$. Indeed, letting $\epsilon\to 0$  with fixed $s$, the limit of these functionals is
\[
(1-s)\overline a\int_\Omega\int_\Omega 
\frac{|u(x)-u(y)|^2}{|x-y|^{d+2s}}dxdy,
\]
where \[\overline a=\int_{(0,1)^{d}}\int_{(0,1)^{d}}a(x,y)dxdy.
\] This can be proved using the compact embedding of $H^s$ into $L^2$, and the weak convergence of the coefficients to $\overline a$ (see e.g.~\cite{Focardi}). Hence, if $s$ tends to $1$ slow enough with respect to how $\epsilon$ tends to $0$, the $\Gamma$-limit of our functionals is simply $\frac{\sigma_{d-1}}{2d}\overline a $ times the Dirichlet integral. This shows that in general the behavior of $F_\epsilon$ must depend on the relation between the two infinitesimal quantities $\epsilon$ and $1-s$.

\smallskip
The main result of this paper is a complete analysis of the behavior as $s$ tends to $1$ and $\epsilon$ tends to $0$ simultaneously, given by the following theorem.

\begin{theorem}[A BBM homogenization theorem]
\label{main}
Let $a:\mathbb{R}^d\times\mathbb{R}^d\to \mathbb{R}$ be a continuous function, $1$-periodic in both variables and such that there exist $0<\alpha\leq \beta$ for which
$    \alpha\leq a\left(x,y\right)\leq \beta$
for all $x,y\in\mathbb{R}^d$. 
    Let $\Omega \subset \R^d$ be an open, bounded set with Lipschitz boundary. Let $\epsilon \in (0,1)$ and $s \coloneqq s(\epsilon) \in (0,1)$. Let 
\[
F_{\epsilon}(u):=(1-s)\int_\Omega\int_\Omega a\left(\frac{x}\epsilon,\frac{y}\epsilon\right)
\frac{|u(x)-u(y)|^2}{|x-y|^{d+2s}}dxdy
\]    
be defined in $H^s(\Omega)$.
    If $\lim\limits_{\epsilon \to 0} s = 1$ and
    \[
        \lim_{\epsilon \to 0} \epsilon^{2(1-s)} = \lambda \in [0,1], 
    \]
    then
    \[  
        \glim_{\epsilon \to 0} F_\epsilon(u) = \frac{\sigma_{d-1}}{2d}\left((1-\lambda)\overline{a}\|\nabla u\|_{L^2(\Omega)}^2+\lambda F_{\rm hom}(u)\right),
    \]
    where $u \in H^1(\Omega)$, the $\Gamma$-limit is calculated with respect to the $L^2(\Omega)$ strong topology and $\sigma_{d-1}$ is the $(d-1)$-Hausdorff measure of the unit sphere $\mathbb{S}^{d-1}$ in $\R^d$. 
\end{theorem}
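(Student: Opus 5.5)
Our plan is to split the kernel at the scale $\epsilon$, or more precisely at a mesoscopic scale $\delta=\delta(\epsilon)$ with $\epsilon\ll\delta$ and $\delta^{1-s}\to1$. For instance, $\delta=\epsilon|\log\epsilon|$ should work, since $(|\log\epsilon|)^{1-s}\to1$ because $1-s\to0$ no faster than $1/|\log\epsilon|$ when $\lambda>0$; the cases $\lambda\in\{0,1\}$ will need a separate choice. The key computation is the weight of the radial measure $(1-s)r^{-1-2s}r^{2}\,dr$ (the $r^2$ coming from $|u(x)-u(y)|^2\approx r^2|\nabla u\cdot\sigma|^2$) on the ranges $(0,\epsilon)$ and $(\epsilon,1)$. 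These are
\[
(1-s)\int_0^\epsilon r^{1-2s}dr=\tfrac12\epsilon^{2(1-s)}\to\tfrac\lambda2,\qquad (1-s)\int_\epsilon^1 r^{1-2s}dr=\tfrac12\bigl(1-\epsilon^{2(1-s)}\bigr)\to\tfrac{1-\lambda}2 .
\]
Accordingly we write $F_\epsilon=F^{\rm sr}_\epsilon+F^{\rm lr}_\epsilon$, restricting the double integral to $|x-y|<\epsilon$ and $|x-y|\ge\epsilon$ respectively. Interactions at distance larger than a fixed $\eta$ are negligible, since they carry a factor $1-s$. Compactness in $L^2$ and $u\in H^1$ of the limit follow from the bounds $\alpha\le a\le\beta$ and the BBM/Ponce compactness theorem \cite{BBM,ponce} applied to the unweighted seminorms.

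\emph{Long-range part.} On $\epsilon\le|x-y|\le\eta$ the coefficient $a(x/\epsilon,y/\epsilon)$ oscillates on scale $\epsilon$, which is much smaller than the interaction distance. For the liminf, we take $u_\epsilon\to u$ and use a Riemann--Lebesgue-type argument. Writing $y=x+r\sigma$ and averaging $x$ over cubes of side $\epsilon$ at fixed $\sigma$ and $r\ge K\epsilon$, the average of $a(z,z+r\sigma/\epsilon)$ over $z$ tends to the integral over the torus of the values $a(z,z+t\sigma)$. Then averaging over directions $\sigma$ and radii $t\in(K,\eta/\epsilon)$ gives $\overline a$ up to small error, by equidistribution of $t\sigma$ mod $\mathbb Z^d$ for a.e. direction. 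To make this rigorous we first convexify, replacing $u_\epsilon$ by mollified or piecewise-affine approximations at a scale $\rho$ with $\epsilon\ll\rho\ll\eta$. This is allowed because nonlocal energies at distances $\gg\rho$ are controlled by difference quotients of the convolution via Jensen. The bulk factor then gives $\frac{1-\lambda}2\,\overline a\,\frac{\sigma_{d-1}}{d}\|\nabla u\|^2$, which is $\frac{\sigma_{d-1}}{2d}(1-\lambda)\overline a\|\nabla u\|^2$. The range $\epsilon\le r\le K\epsilon$ has weight $\tfrac12\epsilon^{2(1-s)}(1-K^{-2(1-s)})\to0$ for fixed $K$.

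\emph{Short-range part.} For $|x-y|<\epsilon$ we blow up: setting $x=\epsilon\xi$ and $v(\xi)=u(\epsilon\xi)/\epsilon$ gives $(1-s)\epsilon^{2-2s}$ times a fractional energy at unit scale with periodic coefficients, truncated at distance $1$. We then need a cell formula. The claim is that, as $s\to1$, the truncated energies $(1-s)\int\int_{|\xi-\zeta|<1}a(\xi,\zeta)\frac{|v(\xi)-v(\zeta)|^2}{|\xi-\zeta|^{d+2s}}$ on periodic perturbations of linear functions $\langle\xi,\cdot\rangle+\phi$ $\Gamma$-converge to $\frac{\sigma_{d-1}}{2d}\int a(y,y)|\xi+\nabla\phi|^2$. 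This is BBM with a continuous coefficient, which localizes on the diagonal by continuity of $a$. Combined with a standard Dal Maso--Modica type argument (fundamental estimate, De Giorgi slicing on cubes of size $\epsilon$ within blocks of size $\gg\epsilon$), this gives a short-range liminf of $\lambda\,\frac{\sigma_{d-1}}{2d}F_{\rm hom}(u)$. The $\Gamma$-limsup uses the standard recovery sequence $u_\epsilon(x)=u(x)+\epsilon\sum\phi_{\nabla u}(x/\epsilon)$ with cell correctors, for $u$ piecewise affine, with the corrector smooth up to mollification. On this sequence the short-range part gives $\lambda F_{\rm hom}$ and, since the corrector is $O(\epsilon)$ in $W^{1,\infty}$-oscillation terms only at scale $\epsilon$, the long-range part sees only $u$ and gives the averaged term. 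Density then concludes.

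The main obstacle is the long-range liminf. There one must decouple the oscillation of $a$ from an arbitrary recovery sequence $u_\epsilon$ that may itself oscillate at scale $\epsilon$. I would handle it via the mesoscale convolution trick: for pairs at distance $r\ge K\epsilon$, replace $u_\epsilon$ by averages over balls of radius $\theta r$. The oscillations of $u_\epsilon$ at scale $\epsilon$ only increase the energy (by convexity), while the averages are smooth at scale $r\gg\epsilon$, where the weak convergence $a(\cdot/\epsilon,\cdot/\epsilon)\rightharpoonup\overline a$ applies. Letting first $\epsilon\to0$, then $K\to\infty$ and $\theta\to0$, gives the result.
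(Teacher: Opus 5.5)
\textbf{The gap is in the long-range liminf}, at the step ``the oscillations of $u_\epsilon$ at scale $\epsilon$ only increase the energy (by convexity)''.

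Jensen gives $F(u*\eta)\le\int\eta(w)F(u(\cdot-w))\,dw$. This is $\le F(u)$ only if $F$ is invariant under the translations being averaged. Your functional is invariant only under simultaneous translations of $(x,y)$ by vectors of $\epsilon\Z^d$. Averaging over balls of radius $\theta r\gg\epsilon$ therefore replaces $a$ by its average along diagonals, and what you actually obtain is
\[
(1-s)\iint a\Big(\frac x\epsilon,\frac y\epsilon\Big)\frac{|u_\epsilon*\eta(x)-u_\epsilon*\eta(y)|^2}{|x-y|^{d+2s}}\lesssim(1-s)\iint g\Big(\frac{y-x}\epsilon\Big)\frac{|u_\epsilon(x)-u_\epsilon(y)|^2}{|x-y|^{d+2s}},\qquad g(h)=\int_{(0,1)^d}a(w,w+h)\,dw.
\]
This bounds a different functional from above; it is not a lower bound for $F_\epsilon(u_\epsilon)$. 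An inequality in the direction you need does hold, via weighted Cauchy--Schwarz, but only with the harmonic mean $\bigl(\int_{(0,1)^d}a(w,w+h/\epsilon)^{-1}dw\bigr)^{-1}$ in place of $a$. After averaging in $h$, this is strictly smaller than $\overline a$ unless $a(x,y)$ depends only on $y-x$. The decorrelation of the $\epsilon$-oscillations of $u_\epsilon$ from those of $a$ at long range is exactly what has to be proved, and continuous mollification presupposes it.

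The paper's fix is to average only over translations that preserve the coefficient.
\begin{enumerate}
\item Blow up at a Lebesgue point at a scale $\rho$ with $\epsilon\ll\rho$, $\rho^{2(1-s)}\to1$ and $\rho/\epsilon\in\N$.
\item Use De Giorgi slicing to make $u_\epsilon-\ell$ vanish near $\partial Q_\rho$, so that it can be extended $\rho$-periodically.
\item Average over the lattice translates $\varphi_\epsilon(\cdot-\epsilon k)$, $k\in K$. Now convexity is legitimate, and it yields an $\epsilon$-periodic $\overline\varphi_\epsilon$ with no larger energy.
\end{enumerate}
This does \emph{not} remove the oscillation at scale $\epsilon$. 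The ingredient missing from your plan is the estimate (Lemma~\ref{l_corrector_long_range}) that an $\epsilon$-periodic function with bounded energy has self-interactions at distances $\ge M\epsilon$ of order $M^{-2s}$, uniformly in $\epsilon$ and $s$. The factor $1-s$ cancels between the cell bound $\iint_{Q_{\epsilon,k}^2}|\overline\varphi_\epsilon(x)-\overline\varphi_\epsilon(y)|^2\le C\epsilon^{2d+2s}/(1-s)$ and the sum of the kernel over distant cells. The cross term is then handled by Cauchy--Schwarz, and only the affine part remains. For that part, a direct cell-by-cell averaging (Lemma~\ref{l_averaging_large_scale}) replaces your equidistribution along rays.

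A lesser point: your short-range liminf is only sketched. Convergence of fixed-cell problems as $s\to1$ does not by itself give a lower bound in the joint limit; the paper imports the needed uniform estimate from \cite{BBD}. At scales $\ll\epsilon$, mollification would actually be harmless, because $a(x/\epsilon,x/\epsilon)$ is almost constant there.
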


We note that $\epsilon^{2(1-s)}\in (0,1)$ for all $\epsilon,s\in(0,1)$, so that the existence of a limit $\lambda\in[0,1]$ always holds, up to subsequences. Furthermore, since the functionals are comparable with the Bourgain--Brezis--Mironescu functionals, this theorem is complemented by a compactness result: if $F_\epsilon(u_\epsilon)\le S<+\infty$, then, up to subsequences and addition of constants, $u_\epsilon$ converges in $L^2(\Omega)$ to some $u\in H^1(\Omega)$ (see \cite{ponce}).

The interesting regime is at the {\em critical scaling} $1-s\sim\frac1{|\log \epsilon|}$, where homogenization and averaging effects coexist. This logarithmic scaling reflects the fact that the BBM energy distributes its mass equally over logarithmic scales.
This can be explained by looking at the recovery sequences for a linear function $\xi \cdot x$. These recovery sequences are still of the form 
$u_\epsilon(x)=\xi \cdot x+\epsilon \varphi(x/\epsilon)$ with $\varphi$ $1$-periodic. If $|x-y|<\!< \epsilon$ then
$$|u_\epsilon(x)-u_\epsilon(y)|^2
\sim |\langle\xi +\nabla \varphi(x),x-y\rangle|^2$$
and, after integrating  in $z=x-y$, one recovers the formula for $A_{\rm hom}$, while if $|x-y|>\!> \epsilon$ then $
\epsilon\varphi$ is negligible, and
$$|u_\epsilon(x)-u_\epsilon(y)
|^2
\sim |\langle\xi, x-y\rangle|^2,
$$
so that, after integrating in $z=x-y$, the coefficient $a$ is averaged. Interactions for which $|x-y|$ is of order $\epsilon$ are negligible. The outcome is a linear combination of the two extreme regimes through the coefficient $\lambda$. The coexistence of different behaviours in a homogenization problem can also be found in the analysis of Ginzburg--Landau vortices in heterogeneous media \cite{ABCDP},  for capacitary problems in a periodic environment \cite{BB}, or for fractional phase transitions \cite{BK}. All of these situations are characterized by the presence of logarithmic scalings. In those cases, however, the energy either concentrates at scales where the coefficient $a$ is constant, or where $a$ is oscillating. In the first case, $a$ is minimized and in the second case $a$ is either homogenized or averaged. 

\smallskip
Theorem \ref{main} also highlights two regimes in which we have {\em separation of scales}. In the
{\em subcritical regime} $\lambda = 0$;
that is, if $
1-s>\!>\frac1{|\log \epsilon|}$, the limit coincides with the iterated limit obtained by letting first $\epsilon\to0$ and then $s\to 1$.
In the {\em supercritical regime} $\lambda = 1$;
that is, if 
$1-s<\!<\frac1{|\log \epsilon|}$,
the limit coincides with the iterated limit obtained by letting first  $s\to 1$  and then $\epsilon\to0$.
The behaviour is in sharp contrast with convolution-type energies, where the critical scaling leads to a genuinely new homogenized cell problem rather than coexistence of two limiting mechanisms (see the next subsection).

\smallskip
The proof relies on splitting the energy into short- and long-range interactions. The short-range contribution is treated using the homogenization result obtained in \cite {BBD}, whereas the long-range contribution requires a blow-up argument showing that oscillations average out. The critical regime is characterized by the coexistence of these two mechanisms.
We note that in \cite {BBD} the supercritical separation of scales is shown to appear in the non-sharp regime $1-s<\!<\epsilon^2$. However, an inspection of the proof shows that it also holds for $1-s<\!<
|\log\epsilon|^{-1}$. The supercritical scaling also appears in the study of fractional thin films  of thickness $\epsilon$, which is the scaling corresponding to a complete separation of scales in the dimension-reduction process in the Bourgain--Brezis--Mironescu limit \cite{BPS}. Conversely, if $1-s>\!>
|\log\epsilon|^{-1}$, then interactions up to scale $
\varepsilon$ are negligible, which explains the behaviour in the subcritical regime. 

Throughout the paper we restrict our analysis to Gagliardo seminorms in the quadratic case $p=2$. This choice is not dictated by the nature of the phenomenon we describe --- the coexistence of homogenization and averaging effects at the critical scaling --- but simplifies the exposition. We expect the same qualitative picture, including the convex combination of homogenized and averaged energies at the critical scaling, to hold for general periodic energies. In place of the oscillating $2$-Gagliardo seminorms one can consider $p$-Gagliardo seminorm, or more general functionals that extend Gagliardo seminorms in the spirit of what has been done for convolution functionals in \cite[Chapter 6]{AABPT}, with the necessary changes to the blow-up and homogenization arguments.

\subsection*{Comparison with convolution functionals}\label{Compa}
In order to highlight the different nature of nonlocality, it is worth comparing the behaviour of fractional energies and that of  energies with {\em convolution} kernels. For such energies, the theorem of Bourgain--Brezis--Mironescu can still be applied. In the quadratic case, we consider functionals of the form
\[
\iint_{\Omega\times\Omega} \frac1{\epsilon^{d+2}}
\rho\left(\frac{x-y}\delta\right)\,
a\left(\frac{x}\epsilon,\frac{y}\epsilon\right)
\frac{|u(x)-u(y)|^2}{|x-y|^{d+2s}}dxdy.
\]
 For these energies, the {\em critical scaling} is $\epsilon\sim\delta$. In this regime, the scale of the kernel and the scale of the periodicity are the same. Their interaction produces a quadratic limit with homogenized matrix characterized by a {\em nonlocal cell-problem formula}. If $\epsilon/\delta
\to 1$, we have
\begin{eqnarray*}
&&\hskip-.7cm\langle A_{\rm hom}\xi, \xi\rangle=\min\Big\{ \iint_{(0,1)^d\times \mathbb R^d}
\rho(x-y) a(x,y)|\xi(x-y)+\varphi(x)-\varphi(y)|^2dxdy:\\&&
\hskip10cm\varphi \ 1\hbox{-periodic}\big\}
\end{eqnarray*}
(see \cite{AABPT}), while we have separation of scales in the other regimes \cite{Brusca}. Hence, contrary to the fractional case, for convolution energies, at the critical scaling we do not have coexistence of the two separation-of-scale regimes.

\subsection*{Plan of the paper}
In Section \ref{lemmata} we gather some preliminary results on the asymptotic behaviour of Gagliardo seminorms: on the one hand, results that allow us to reduce the range of interactions without changing the limit, and, on the other hand, a Riemann--Lebesgue-type lemma, which describes the behaviour of long-range interactions for Gagliardo-type integrals with periodic coefficients. Section \ref{sec:liminf} is dedicated to the proof of the liminf inequality for Theorem \ref{main}. The main argument is to subdivide the computation into short-range and long-range interactions. The short-range interactions are homogenized using the result in \cite{BBD}, which is valid in our setting. For the long-range interactions, we use a blow-up and periodization argument that allows us to consider only $\epsilon$-periodic perturbations of affine functions. Since these perturbations are negligible at long-range, we can consider just linear functions, and compute the corresponding limit using the Riemann--Lebesgue type lemma from Section~\ref{lemmata}. Finally, we complete the proof of Theorem \ref{main} by showing that test functions for the classical homogenization formula provide recovery sequences for affine functions also in the nonlocal case. Again, we show that the relevant energy is due only to short-range and long-range contributions, while interactions in the ``intermediate range" are negligible, due to logarithmic scaling of the energies. 

\section{Notation and preliminary results}\label{lemmata}
We use standard notation for Sobolev and Lebesgue spaces. The characteristic function of a set $A$ will be denoted by $\1_A$; if $A\in\mathbb{R}^d$ is Borel, then both $\mathscr{L}^d(A)$ and $|A|$ denote its Lebesgue measure. The notation $\mu_j \rightharpoonup\mu$ is used for the narrow convergence of measures (in duality with $\mathcal{C}_b$).

\subsection{Asymptotics of double integrals}\label{lemmata1}
In this section, we include some results regarding the behaviour of double integrals. The first is a general estimate in \cite[Theorem 1]{BBM}.

\begin{theorem}\label{BBM_theorem1}
    Let $\Omega$ be a bounded connected Lipschitz open set. Let $f\in H^1(\Omega)$ and $\rho\in L^1(\R^d)$, with $\rho\geq 0.$ Then there exists $C=C(\Omega)>0$ such that
\begin{equation}\label{BBM_formula}
    \iint_{\Omega^2}\dfrac{|f(x)-f(y)|^2}{|x-y|^2} \rho(x-y) dx dy \leq C \|\nabla f\|_{L^2(\Omega)}^2\|\rho\|_{L^1(\R^d)}.
\end{equation}
\end{theorem}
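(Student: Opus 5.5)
The approach is the standard Bourgain--Brezis--Mironescu route: an extension to $\R^d$ followed by a translation estimate integrated against $\rho$.

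\emph{Extension.} Since $\Omega$ is bounded, connected and Lipschitz, there is a bounded extension operator $E:H^1(\Omega)\to H^1(\R^d)$. Composing with a subtraction of the mean, and using Poincar\'e--Wirtinger on $\Omega$, we get
\[
\|\nabla (Ef)\|_{L^2(\R^d)}\le C_0\|\nabla f\|_{L^2(\Omega)}.
\]
Subtracting the mean leaves the left-hand side of \eqref{BBM_formula} unchanged, so we may assume $\int_\Omega f=0$. Connectedness is used exactly here: it allows us to bound $\|f\|_{H^1}$ by $\|\nabla f\|_{L^2}$.

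\emph{Translation estimate.} Set $g=Ef$. For $g\in C^\infty_c(\R^d)$ and any $h\in\R^d$,
\[
g(x+h)-g(x)=\int_0^1\nabla g(x+th)\cdot h\,dt .
\]
By Jensen's inequality and Fubini,
\[
\int_{\R^d}|g(x+h)-g(x)|^2\,dx\le |h|^2\|\nabla g\|_{L^2(\R^d)}^2 .
\]
By density this bound holds for every $g\in H^1(\R^d)$.

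\emph{Integration against $\rho$.} Substitute $y=x+h$ and enlarge the domain of integration from $\Omega^2$ to $\R^d\times\R^d$ (legitimate because $g=f$ on $\Omega$ and the integrand is nonnegative). Then
\[
\iint_{\Omega^2}\frac{|f(x)-f(y)|^2}{|x-y|^2}\rho(x-y)\,dx\,dy\le\int_{\R^d}\frac{\rho(h)}{|h|^2}\int_{\R^d}|g(x+h)-g(x)|^2\,dx\,dh\le \|\nabla g\|_{L^2(\R^d)}^2\|\rho\|_{L^1(\R^d)} .
\]
Combining with the extension bound gives \eqref{BBM_formula} with $C=C_0^2$.

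The only step that needs care is the extension bound controlling the full gradient on $\R^d$ by $\|\nabla f\|_{L^2(\Omega)}$ alone. It follows from the $H^1$ extension theorem for Lipschitz domains together with Poincar\'e--Wirtinger, which holds because $\Omega$ is bounded and connected. The remaining steps are routine.
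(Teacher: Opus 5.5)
Your proof is correct and is essentially the standard argument of \cite[Theorem 1]{BBM}, which the paper cites rather than reproving. That argument extends $f$ to $\R^d$, uses the translation estimate $\|g(\cdot+h)-g\|_{L^2(\R^d)}\le|h|\,\|\nabla g\|_{L^2(\R^d)}$, and integrates against $\rho$, with connectedness entering through Poincar\'e--Wirtinger so that only $\|\nabla f\|_{L^2(\Omega)}$ appears on the right (the substitution $y=x+h$ actually produces $\rho(-h)$, which is harmless since $\|\rho(-\cdot)\|_{L^1}=\|\rho\|_{L^1}$).
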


\bigskip The following results treat more specifically Gagliardo seminorms.
For future reference, it is convenient to introduce a positive parameter $\epsilon$, and consider families $s=s(\epsilon)$ satisfying $s\to 1^-$ as $\epsilon\to 0$. Note that in the statements we do not specify the dependence of the parameter $s$ on $\epsilon$. To simplify the notation, we will adopt this convention in the rest of the paper for parameters dependent on $\epsilon$.

\smallskip
The first result is a quantitative version of \cite[Lemma 3]{BBD}; we include the proof for the sake of completeness.

\begin{lemma}\label{l_2_large_interactions}
    There exists $C=C(d)>0$ such that
    \begin{equation}\label{large_interactions_estimate}
        (1-s)\iint_{A^2\cap\{|x-y|\geq r\}} \dfrac{|f(x)-f(y)|^2}{|x-y|^{d+2s}}dxdy\leq \dfrac{C(1-s)}{r^{2s}}\|f\|^2_{L^2(A)}
    \end{equation}
     for every bounded measurable set $A\subset\R^d$, every $r>0$ and every $f\in L^2(A)$.
    In particular,
    if $(u_\epsilon)\subset L^2(\Omega)$ is a bounded sequence and $r\coloneqq r(\epsilon)>0$ satisfies
    \begin{equation}
        \lim\limits_{\epsilon\to 0}\frac{1-s}{r^2}=0.
    \end{equation}
    Then,
    \begin{equation}
        \lim\limits_{\epsilon\to 0}(1-s)\iint_{\Omega^2\cap\{|x-y|\geq r\}}\dfrac{|u_\epsilon(x)-u_\epsilon(y)|^2}{|x-y|^{d+2s}}dxdy=0.
    \end{equation}
\end{lemma}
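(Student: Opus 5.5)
The plan is to use the elementary inequality $|f(x)-f(y)|^2\le 2|f(x)|^2+2|f(y)|^2$ and Fubini, which reduces the double integral to a one-variable tail integral of the kernel. The integrand is symmetric in $x$ and $y$, so
\[
\iint_{A^2\cap\{|x-y|\ge r\}}\frac{|f(x)-f(y)|^2}{|x-y|^{d+2s}}\,dx\,dy\le 4\int_A|f(x)|^2\Big(\int_{\{|z|\ge r\}}\frac{dz}{|z|^{d+2s}}\Big)dx .
\]
Here, for fixed $x$, we enlarged the inner domain $\{y\in A:|x-y|\ge r\}$ to all $y$ with $|x-y|\ge r$ and substituted $z=x-y$. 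Integrating in polar coordinates gives
\[
\int_{\{|z|\ge r\}}|z|^{-d-2s}\,dz=\sigma_{d-1}\int_r^\infty t^{-1-2s}\,dt=\frac{\sigma_{d-1}}{2s}\,r^{-2s}.
\]
Multiplying by $(1-s)$ yields the estimate \eqref{large_interactions_estimate} with constant $2\sigma_{d-1}/s$. To make $C$ depend only on $d$, I will restrict to $s\ge 1/2$, which is harmless since $s\to1$ in all applications; this gives $C=4\sigma_{d-1}$. Alternatively, one may keep the factor $1/s$ explicitly. This is the only point needing care, and it is a minor one.

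For the second part, I apply the estimate with $A=\Omega$, $f=u_\epsilon$ and $r=r(\epsilon)$. The right-hand side is bounded by $C(1-s)\,r^{-2s}\sup_\epsilon\|u_\epsilon\|^2_{L^2(\Omega)}$, so it suffices to show that $(1-s)r^{-2s}\to0$. I will split into two cases according to the size of $r$. If $r\le1$, then $r^{-2s}\le r^{-2}$, and $(1-s)r^{-2}\to0$ by hypothesis. If $r\ge1$, then $r^{-2s}\le 1$, so $(1-s)r^{-2s}\le 1-s\to0$. Applying this bound to each $\epsilon$ according to which case it falls in, the quantity tends to zero along the whole family, which concludes the proof.
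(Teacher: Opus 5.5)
Your proposal is correct and follows the paper's proof essentially verbatim: the bound $|f(x)-f(y)|^2\le 2|f(x)|^2+2|f(y)|^2$, symmetry, and polar coordinates give the tail estimate, and the paper's use of $r^{-2s}\le\max\{r^{-2},1\}$ in the second part is exactly your case split. Your observation that the constant is really $2\sigma_{d-1}/s$ is legitimate and is left implicit in the paper: $C=C(d)$ requires $s$ bounded away from $0$ (e.g.\ $s\ge 1/2$), which is harmless since $s\to1$ in all applications.
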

\begin{proof}
    Using $|f(x)-f(y)|^2\leq 2|f(x)|^2+2|f(y)|^2$, and integrating in polar coordinates, we obtain
        $$(1-s)\iint_{A^2\cap\{|x-y|\geq r\}} \dfrac{|f(x)-f(y)|^2}{|x-y|^{d+2s}}dxdy
        \leq 4(1-s)\|f\|^2_{L^2(A)}\sigma_{d-1}\int_r^{+\infty}t^{-1-2s}dt,
    $$
    which gives \eqref{large_interactions_estimate}. Applying \eqref{large_interactions_estimate} with $A=\Omega$ and $f=u_\epsilon$, and using $r^{-2s}\leq\max\{r^{-2},1\}$, we obtain
        $$(1-s)\iint_{\Omega^2\cap\{|x-y|\geq r\}} \dfrac{|u_\epsilon(x)-u_\epsilon(y)|^2}{|x-y|^{d+2s}}dxdy
        \leq C\max\Big\{\frac{1-s}{r^2},1-s\Big\}\|u_\epsilon\|^2_{L^2(\Omega)},
  $$ 
    which vanishes as $\epsilon\to0$, since $(u_\epsilon)$ is bounded in $L^2(\Omega)$.
\end{proof}

The following lemma states that interactions at order $\epsilon$ are asymptotically negligible in these integrals, provided that the sequences satisfy an $\epsilon$-bound.

\begin{lemma}\label{l_infty_large_interactions}
    Let $(v_\epsilon)\subset L^{\infty}(\Omega)$ and assume that there exists $C>0$ such that $\|v_\epsilon\|_{\infty}\leq C\epsilon$. Then, for every $M>0$, we have
    \begin{equation}
        \lim\limits_{\epsilon\to 0}(1-s)\iint_{\Omega^2\cap\{|x-y|\geq M\epsilon\}} \dfrac{|v_\epsilon(x)-v_\epsilon(y)|^2}{|x-y|^{d+2s}}dxdy=0.
    \end{equation}
\end{lemma}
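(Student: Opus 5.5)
We bound the integrand pointwise and integrate in polar coordinates, as in the proof of Lemma \ref{l_2_large_interactions}, but now using the $L^\infty$ bound in place of the $L^2$ norm. Since $\|v_\epsilon\|_\infty\le C\epsilon$, we have $|v_\epsilon(x)-v_\epsilon(y)|^2\le 4C^2\epsilon^2$ for a.e. $x,y$. Hence
\[
(1-s)\iint_{\Omega^2\cap\{|x-y|\ge M\epsilon\}}\frac{|v_\epsilon(x)-v_\epsilon(y)|^2}{|x-y|^{d+2s}}dxdy\le 4C^2\epsilon^2(1-s)\,|\Omega|\,\sigma_{d-1}\int_{M\epsilon}^{\infty}t^{-1-2s}dt .
\]
The radial integral equals $\frac{(M\epsilon)^{-2s}}{2s}$, so the right-hand side is bounded by
\[
\frac{2C^2|\Omega|\sigma_{d-1}}{s}\,M^{-2s}\,(1-s)\,\epsilon^{2-2s}.
\]

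It remains to pass to the limit. We have $M^{-2s}\le\max\{M^{-2},1\}$ and $1/s\to1$. Moreover $\epsilon^{2(1-s)}\in(0,1)$ for every $\epsilon,s\in(0,1)$, so the right-hand side is at most a constant times $(1-s)$, which tends to $0$ because $s\to1$. This proves the claim.

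No genuine obstacle appears. The only point to check is that the factor $\epsilon^{2}$ coming from the $L^\infty$ bound exactly compensates the factor $(M\epsilon)^{-2s}$ from the radial integral, up to the bounded quantity $\epsilon^{2(1-s)}$. The vanishing then comes from the normalization factor $(1-s)$ alone, with no assumption relating $\epsilon$ and $s$. If one wanted to avoid integrating over all of $\R^d$, the bound $|\Omega|\sigma_{d-1}$ can also be obtained by first integrating in $y$ over the exterior of the ball $B_{M\epsilon}(x)$ and then in $x\in\Omega$.
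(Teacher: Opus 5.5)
Your proof is correct and follows the same argument as the paper: bound the numerator by $4C^2\epsilon^2$, integrate in polar coordinates, and use $\epsilon^{2(1-s)}\le 1$ so that the factor $(1-s)$ forces the limit to vanish. The only cosmetic difference is that you let the radial variable run up to $+\infty$ instead of truncating at the diameter $R$ of $\Omega$, which removes the harmless $-E\epsilon^2$ term in the paper's bound.
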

\begin{proof} Since $\Omega$ is bounded, there exists $R>0$ such that $x,y\in\Omega$ implies $|x-y|<R$.
Using the hypothesis on $(v_\epsilon)$, we get 
\begin{align}\label{lem_stima_1}
    \iint_{\Omega^2\cap\{|x-y|\geq M\epsilon\}} \dfrac{|v_\epsilon(x)-v_\epsilon(y)|^2}{|x-y|^{d+2s}}dxdy\leq 4C^2\epsilon^2\iint_{\Omega^2\cap\{R>|x-y|\geq M\epsilon\}}\dfrac{1}{|x-y|^{d+2s}}dxdy
\end{align}    
Integrating in polar coordinates, we may estimate \eqref{lem_stima_1} from above with
\begin{eqnarray*}  4C^2\epsilon^2\int_\Omega\int_{\mathbb{S}^{d-1}}\int_{M\epsilon}^{R}t^{-1-2s}dt d\sigma(\nu)dx
    =4C^2\epsilon^2|\Omega|\sigma_{d-1}\dfrac{(M\epsilon)^{-2s}-R^{-2s}}{2s}
    \leq D\epsilon^{2(1-s)}-E\epsilon^2,
\end{eqnarray*}
and $D, E>0$ are some fixed positive constants.
The claim follows by multiplying by $1-s$ and letting $\epsilon\to 0$.
\end{proof}

The following lemma states that double integrals depending on sequences of equi-Lipschitz functions are asymptotically negligible on sets with vanishing measure.

\begin{lemma}\label{w_1_infty_close_interactions}
    Let $(\Omega_\epsilon)$ be a sequence of measurable subsets of $\Omega$ and let $(f_\epsilon)$ be a sequence of uniformly Lipschitz functions on $\Omega$. Assume that
        $\lim\limits_{\epsilon\to 0}|\Omega_\epsilon|=0$.
    Then, it follows that  
    \begin{equation}
        \lim\limits_{\epsilon\to 0}\ (1-s)\int_{\Omega_\epsilon} \int_{\Omega} \dfrac{|f_\epsilon(x)-f_\epsilon(y)|^2}{|x-y|^{d+2s}}dxdy =0
    \end{equation}
\end{lemma}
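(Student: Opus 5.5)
The plan is to split the double integral into short-range and long-range interactions at a threshold $r>0$. The short-range part is controlled by the Lipschitz bound, so it only sees the measure of $\Omega_\epsilon$. The long-range part is controlled by the trivial $L^\infty$-type bound together with the factor $1-s$. Let $L$ be a common Lipschitz constant of the $f_\epsilon$. It is natural to assume that the $f_\epsilon$ are also equibounded, or at least bounded in $L^2(\Omega)$ with $\sup_\epsilon\|f_\epsilon\|_{L^2(\Omega)}<\infty$; otherwise one replaces $f_\epsilon$ by $f_\epsilon-f_\epsilon(x_0)$, which leaves the integrand unchanged. Since $\Omega$ is bounded, $|f_\epsilon-f_\epsilon(x_0)|\le L\,\mathrm{diam}(\Omega)$, so we may always assume $\|f_\epsilon\|_\infty\le K$.

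For the short-range part, fix $r\in(0,1)$. For $y\in\Omega_\epsilon$ and $|x-y|<r$ we have $|f_\epsilon(x)-f_\epsilon(y)|^2\le L^2|x-y|^2$. Integrating in polar coordinates around $y$ gives
\[
(1-s)\int_{\Omega_\epsilon}\int_{\Omega\cap\{|x-y|<r\}}\frac{|f_\epsilon(x)-f_\epsilon(y)|^2}{|x-y|^{d+2s}}\,dx\,dy\le (1-s)L^2\sigma_{d-1}|\Omega_\epsilon|\int_0^r t^{1-2s}\,dt=\frac{L^2\sigma_{d-1}}{2}\,r^{2(1-s)}|\Omega_\epsilon|.
\]
Since $r<1$, this is at most $\frac{L^2\sigma_{d-1}}2|\Omega_\epsilon|$, which tends to $0$.

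For the long-range part, we use $|f_\epsilon(x)-f_\epsilon(y)|^2\le 4K^2$. Integrating in polar coordinates as in the proof of Lemma \ref{l_2_large_interactions} bounds this term by
\[
4K^2(1-s)\sigma_{d-1}|\Omega_\epsilon|\int_r^\infty t^{-1-2s}\,dt=\frac{2K^2\sigma_{d-1}(1-s)}{s\,r^{2s}}|\Omega_\epsilon|.
\]
With $r$ fixed this tends to $0$, since both $1-s\to0$ and $|\Omega_\epsilon|\to0$. Alternatively, one can apply Lemma \ref{l_2_large_interactions} directly, or take $r=1$ from the start so that both pieces are bounded by a constant times $|\Omega_\epsilon|$.

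Summing the two estimates, the whole expression is bounded by $C(L,K,d)\,|\Omega_\epsilon|$ for all $\epsilon$ with $s\ge 1/2$, which gives the claim. The only delicate point is the uniform bound $K$. For the integrand it is harmless because of the normalization by a constant described above. If one prefers not to assume equiboundedness, the long-range region can be handled by the Lipschitz bound combined with $|x-y|\le\mathrm{diam}(\Omega)$: taking $r=\mathrm{diam}(\Omega)$, the short-range estimate alone covers everything, with bound $\frac{L^2\sigma_{d-1}}2\max\{1,\mathrm{diam}(\Omega)^{2}\}|\Omega_\epsilon|$. This is the cleanest route, and it is the one I would actually write down.
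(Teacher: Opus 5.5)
Your final route is correct and is essentially the paper's argument. You take $r=\mathrm{diam}(\Omega)$, bound the integrand by $L^2|x-y|^{2-d-2s}$ via the Lipschitz estimate, and integrate in polar coordinates to get $\frac{L^2\sigma_{d-1}}{2}\mathrm{diam}(\Omega)^{2(1-s)}|\Omega_\epsilon|$; the paper does the same, using $\Omega\subset B_R(0)$ and integrating $t^{1-2s}$ up to $2R$. Your earlier split into short and long range with the $L^\infty$ normalization is also valid, but it is unnecessary.
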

\begin{proof} 
By hypothesis, there exists $L>0$ such that
$|f_\epsilon(x)-f_\epsilon(y)| \leq L|x-y|$
for every $x,y \in \Omega$ and for every $\epsilon > 0$. 
We then obtain
\begin{align}
 (1-s)\int_{\Omega_\epsilon} \int_{\Omega} \dfrac{|f_\epsilon(x)-f_\epsilon(y)|^2}{|x-y|^{d+2s}}dxdy
    \label{lem_stima_3} \leq 2L^2  (1-s) \int_{\Omega_\epsilon} \int_{\Omega}\dfrac{1}{|x-y|^{d+2s-2}}dxdy
\end{align}
We can estimate \eqref{lem_stima_3} from above, integrating in polar coordinates and using that $\Omega \subset B_R(0)$ for some $R > 0$, obtaining
\begin{eqnarray*}
    (1-s)\int_{\Omega_\epsilon} \int_{\Omega}\dfrac{1}{|x-y|^{d+2s-2}}dxdy
    &\leq& (1-s)\int_{\Omega_\epsilon}\int_{\mathbb{S}^{d-1}}\int_0^{2R}t^{1-2s}dt d\sigma(\nu)dx\\
    &=&\frac {|\Omega_\epsilon|\,\sigma_{d-1}\,(2R)^{2(1-s)}}2,
\end{eqnarray*}
which tends to $0$, since $(2R)^{2(1-s)}$ is bounded and $|\Omega_\epsilon|\to 0$.
\end{proof}

\subsection{Averaged limits of Gagliardo seminorms}
The next lemma is a sort of quantified Riemann--Lebesgue on the average behaviour of double integrals, in which we take into account periodic coefficients. In order to state this result, we need some specific notation.

Consider a continuous function $a:\mathbb{R}^d\times\mathbb{R}^d\to \mathbb{R}$, which is $1$-periodic in both variables. We set $\beta=\max |a|$, and 
\begin{equation}\label{mean_a}
    \overline a\coloneqq\iint_{(0,1)^d\times(0,1)^d} a (x,y )dxdy,
\end{equation}
the average of $a$.

For $\epsilon>0$ and $k\in\Z^d$, we denote by $Q_{\epsilon,k}\coloneqq \epsilon\left(k+[0,1)^d\right)$, the cubes of the partition of $\R^d$ induced by the lattice $\epsilon\Z^d$ and, for measurable sets $A\subset\R^d$, we set
\[
    \partial_\epsilon A\coloneqq
    \big\{x\in\R^d\ | \operatorname{dist}(x,\partial A)\leq \sqrt d\,\epsilon\big\}.
\]

\begin{lemma}[Averaging interactions at large scales]\label{l_averaging_large_scale}
    There exists $C=C(d)>0$ such that, for every $s\in[3/4,1)$, $\epsilon\in(0,1)$, $\epsilon\leq r_1<r_2\leq 1$, $\xi\in\R^d$ and every bounded measurable set $A\subset\R^d$, it holds
    \begin{align}\label{averaging_estimate}
        \nonumber&\bigg\vert(1-s)\int_{A}\int_{\{r_1\leq|x-y|\leq r_2\}} a\left(\frac x\epsilon,\frac y\epsilon\right)\dfrac{|\langle\xi,x-y\rangle|^2}{|x-y|^{d+2s}}dydx-\overline{a}\,\dfrac{\sigma_{d-1}}{2d}|\xi|^2\Big(r_2^{2(1-s)}-r_1^{2(1-s)}\Big)|A|\bigg\vert\\
        &\leq C\beta|\xi|^2\Big((1-s)\dfrac{\epsilon}{r_1}|A|+|\partial_\epsilon A|\Big).
    \end{align}
\end{lemma}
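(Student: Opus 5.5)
The plan is to first fix $x$ and average in $y$ over the annulus, and then average in $x$ over the $\epsilon$-cells covering $A$.

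\emph{Averaging in $y$.} Write $y=x+z$ with $r_1\le|z|\le r_2$, and use polar coordinates $z=t\nu$. The kernel $|\langle\xi,z\rangle|^2|z|^{-d-2s}$ equals $t^{1-2s}|\langle\xi,\nu\rangle|^2$ times $t^{d-1}dt\,d\sigma(\nu)$ up to the Jacobian. Hence for fixed $x$ the inner integral is
\[
\int_{\mathbb S^{d-1}}|\langle\xi,\nu\rangle|^2\int_{r_1}^{r_2} a\Big(\frac x\epsilon,\frac{x+t\nu}\epsilon\Big)t^{1-2s}\,dt\,d\sigma(\nu).
\]
Define $\overline a_2(x')\coloneqq\int_{(0,1)^d}a(x',y')dy'$. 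I would compare $a(x/\epsilon,\cdot)$ with $\overline a_2(x/\epsilon)$ directly in the $d$-dimensional annulus rather than along rays. To do so, partition the annulus $\{r_1\le|z|\le r_2\}$ by the cubes $Q_{\epsilon,k}-x$. On every cube fully contained in the annulus, replace the weight $g(z)=|\langle\xi,z\rangle|^2|z|^{-d-2s}$ by its value at a fixed point of the cube. Then $\int_{Q}a(x/\epsilon,(x+z)/\epsilon)\,dz=\epsilon^d\,\overline a_2(x/\epsilon)$ exactly, by periodicity.

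The oscillation error on a cube at distance $t$ is bounded by $\beta\,\epsilon\,\sup_Q|\nabla g|\,\epsilon^d\lesssim\beta|\xi|^2\epsilon\, t^{-d-2s}\epsilon^d$. Summing over cubes gives
\[
\beta|\xi|^2\epsilon\int_{r_1}^{r_2}t^{-2s}\,dt\lesssim\beta|\xi|^2\,\epsilon\, r_1^{1-2s}/(2s-1),
\]
and $s\ge3/4$ controls the factor $1/(2s-1)$. Since $r_1\le1$ and $s<1$, we have $r_1^{1-2s}\le r_1^{-1}$. After multiplying by $(1-s)$, this yields the error term $C\beta|\xi|^2(1-s)(\epsilon/r_1)$ per unit $x$.

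Cubes meeting the two boundary spheres of the annulus lie in shells of width $\sqrt d\,\epsilon$ around $|z|=r_i$. On these shells $g\lesssim|\xi|^2r_i^{-d-2s+2}$ and the shell volume is $\lesssim r_i^{d-1}\epsilon$, giving $\lesssim\beta|\xi|^2\epsilon r_i^{1-2s}\le\beta|\xi|^2\epsilon/r_1$. This is again absorbed in the same term once multiplied by $(1-s)$. The remaining integral $\overline a_2(x/\epsilon)(1-s)\int g$ equals exactly $\overline a_2(x/\epsilon)\frac{\sigma_{d-1}}{2d}|\xi|^2\big(r_2^{2(1-s)}-r_1^{2(1-s)}\big)$. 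This uses $\int_{\mathbb S^{d-1}}|\langle\xi,\nu\rangle|^2=\sigma_{d-1}|\xi|^2/d$ and $(1-s)\int_{r_1}^{r_2}t^{1-2s}dt=\tfrac12(r_2^{2(1-s)}-r_1^{2(1-s)})$.

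\emph{Averaging in $x$.} Integrate over $A$. On each cube $Q_{\epsilon,k}\subset A$ we have $\int_{Q}\overline a_2(x/\epsilon)dx=\epsilon^d\,\overline a$ exactly, so these cubes contribute $\overline a$ times the constant times their measure. The cubes meeting $\partial A$, together with the part of $A$ not covered by full cubes, are contained in $\partial_\epsilon A$. There both the true integrand and the averaged one are bounded by $\beta\frac{\sigma_{d-1}}{2d}|\xi|^2\cdot(r_2^{2(1-s)}-r_1^{2(1-s)})\le C\beta|\xi|^2$, since $r_2\le1$. This gives the term $C\beta|\xi|^2|\partial_\epsilon A|$. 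The $y$-step errors are integrated over $A$, giving $(1-s)(\epsilon/r_1)|A|$.

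The main obstacle is the $y$-averaging step, specifically keeping the error of order $\epsilon/r_1$ uniformly in $s$ with no loss from the near-singularity. This is handled by the gradient bound $|\nabla g(z)|\lesssim|\xi|^2|z|^{-d-2s}$ together with $r_1\ge\epsilon$. The condition $r_1\ge\epsilon$ also guarantees that cubes of size $\epsilon$ resolve the annulus, so that the shell estimates hold.
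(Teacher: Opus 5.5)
Your proof is correct, but it is organized differently from the paper's. The paper works directly in the product space. It partitions $\Lambda=(A\times\R^d)\cap\{r_1\le|x-y|\le r_2\}$ into three pieces:
\begin{itemize}
\item full product cells $Q_{\epsilon,j}\times Q_{\epsilon,k}\subset\Lambda$ with $Q_{\epsilon,j}\subset A$;
\item a boundary layer $\{x\in\partial_\epsilon A\}$;
\item the product cells straddling $\{|x-y|=r_i\}$.
\end{itemize}
On each full cell it uses the joint average $\iint_{Q_{\epsilon,j}\times Q_{\epsilon,k}}a=\epsilon^{2d}\,\overline a$, together with a gradient bound for the kernel in the pair $(x,y)$.

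You instead fiber the estimate. For fixed $x$ you average only in $y$, against the partial mean $\overline a_2(x/\epsilon)$. You then average the $\epsilon$-periodic function $\overline a_2(\cdot/\epsilon)$ in $x$. This separates the two error sources cleanly:
\begin{itemize}
\item The term $(1-s)\epsilon/r_1$ comes entirely from the $y$-step. It is uniform in $x$ and never sees $A$.
\item The term $|\partial_\epsilon A|$ comes entirely from the $x$-step. There the weight $\frac{\sigma_{d-1}}{2d}|\xi|^2\big(r_2^{2(1-s)}-r_1^{2(1-s)}\big)$ is constant in $x$, so no regularity of the kernel in $x$ is needed.
\end{itemize}
The paper's route avoids introducing $\overline a_2$ and handles everything with a single partition, at the cost of the three-region bookkeeping in $\R^{2d}$. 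Both routes use the same ingredients: $s\ge 3/4$ to control $1/(2s-1)$, $r_1\ge\epsilon$ so that $|z|$ is comparable across a cell, and $r_2\le 1$ to bound the total mass of the kernel.

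One slip needs fixing. The weight $g(z)=|\langle\xi,z\rangle|^2|z|^{-d-2s}$ is homogeneous of degree $2-d-2s$. Hence the correct bound is $|\nabla g(z)|\lesssim|\xi|^2|z|^{1-d-2s}$, not $|z|^{-d-2s}$ as you wrote in the per-cube error and again in your last paragraph. With the exponent as written, summing over cells would give $\epsilon\int_{r_1}^{r_2}t^{-1-2s}\,dt\sim\epsilon\, r_1^{-2s}$, which is not $O(\epsilon/r_1)$ when $r_1$ is small. Your displayed sum $\epsilon\int_{r_1}^{r_2}t^{-2s}\,dt$ is the correct one, and it corresponds to the correct exponent, so the conclusion stands.
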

\begin{proof}
In the following, $C=C(d)>0$ may change from line to line. Set
\[
    \kappa(x,y)\coloneqq(1-s)\dfrac{|\langle\xi,x-y\rangle|^2}{|x-y|^{d+2s}},\qquad \Lambda\coloneqq\big(A\times\R^d\big)\cap\{r_1\leq|x-y|\leq r_2\}.
\]
Integrating in polar coordinates and using that, by symmetry,
\begin{equation}\label{sphere_average}
    \int_{\mathbb{S}^{d-1}}|\langle \xi,\nu\rangle|^2d\sigma(\nu)=\dfrac{\sigma_{d-1}}{d}|\xi|^2
\end{equation}
for all $\xi\in\R^d$, we get
\begin{equation}\label{kappa_mass}
    \int_{\Lambda}\kappa(x,y)dydx=\dfrac{\sigma_{d-1}}{2d}|\xi|^2\big(r_2^{2(1-s)}-r_1^{2(1-s)}\big)|A|,
\end{equation}
so that the left-hand side of \eqref{averaging_estimate} equals the absolute value of
\[
    \int_\Lambda\left(a\left(\frac x\epsilon,\frac y\epsilon\right)-\overline{a}\right)\kappa(x,y)dydx.
\]
We partition $\Lambda$ into the sets
\begin{align}
    \Lambda_{1,\epsilon}&\coloneqq\bigcup\left\{Q_{\epsilon,j}\times Q_{\epsilon,k}\,\middle\vert\, Q_{\epsilon,j}\times Q_{\epsilon,k}\subset \Lambda,\ Q_{\epsilon,j}\subset A\right\},\\
    \Lambda_{2,\epsilon}&\coloneqq \Lambda\cap\left(\partial_\epsilon A\times\R^d\right),\\
    \Lambda_{3,\epsilon}&\coloneqq \Lambda\setminus\left(\Lambda_{1,\epsilon}\cup\Lambda_{2,\epsilon}\right).
\end{align}
Note that $\Lambda_{3,\epsilon}$ is covered by the cubes $Q_{\epsilon,j}\times Q_{\epsilon,k}$ with $Q_{\epsilon,j}\subset A$ that meet $\Lambda$ without being contained in it.

Let $Q_{\epsilon,j}\times Q_{\epsilon,k}\subset\Lambda_{1,\epsilon}$ and fix $(\overline x,\overline y)\in Q_{\epsilon,j}\times Q_{\epsilon,k}$. Since $a$ is $1$-periodic in both variables, we have
\begin{equation}\label{cell_average_a}
    \int_{Q_{\epsilon,j}}\int_{Q_{\epsilon,k}} a\left(\frac x\epsilon,\frac y\epsilon\right)dydx=\epsilon^{2d}\,\overline{a}
\end{equation}
 for all $j,k\in\Z^d$, so that
\begin{align}
    \nonumber\int_{Q_{\epsilon,j}}&\int_{Q_{\epsilon,k}}\left(a\left(\frac x\epsilon,\frac y\epsilon\right)-\overline{a}\right)\kappa(x,y)dydx\\
    \label{cell_error}&=\int_{Q_{\epsilon,j}}\int_{Q_{\epsilon,k}}\left(a\left(\frac x\epsilon,\frac y\epsilon\right)-\overline{a}\right)\left(\kappa(x,y)-\kappa(\overline x,\overline y)\right)dydx.
\end{align}
In $\{|x-y|\geq r_1\}$, we have
\begin{equation}\label{kernel_gradient}
    |\nabla_{(x,y)}\kappa(x,y)|\leq \sqrt 2(d+4)(1-s)|\xi|^2|x-y|^{1-d-2s},
\end{equation}
and two points $(x,y)$, $(x',y')$ of $Q_{\epsilon,j}\times Q_{\epsilon,k}$ satisfy $|x'-y'|\geq r_1\geq\epsilon$ and $|x-y|\leq|x'-y'|+2\sqrt d\,\epsilon\leq(1+2\sqrt d)|x'-y'|$. Hence, by the mean-value theorem,
\begin{equation}\label{kernel_oscillation}
    |\kappa(x,y)-\kappa(\overline x,\overline y)|\leq C\epsilon(1-s)|\xi|^2|x-y|^{1-d-2s}
\end{equation}
for $(x,y)\in Q_{\epsilon,j}\times Q_{\epsilon,k}$.

Now, we sum \eqref{cell_error} over the cubes of $\Lambda_{1,\epsilon}$ and use \eqref{kernel_oscillation}, $|a|\le \beta$, $2s-1\geq1/2$, and $r_1^{2(1-s)}\leq1$. Thus, we obtain
\begin{align}
    \nonumber\Big|\int_{\Lambda_{1,\epsilon}}&\left(a\left(\frac x\epsilon,\frac y\epsilon\right)-\overline{a}\right)\kappa(x,y)dydx\Big|\\
    \nonumber&\leq\sum_{Q_{\epsilon,j}\times Q_{\epsilon,k}\subset\Lambda_{1,\epsilon}}\int_{Q_{\epsilon,j}}\int_{Q_{\epsilon,k}}\left\vert a\left(\frac x\epsilon,\frac y\epsilon\right)-\overline{a}\right\vert\left\vert\kappa(x,y)-\kappa(\overline x,\overline y)\right\vert dydx\\
    \nonumber&\leq 2\beta C\epsilon(1-s)|\xi|^2\int_A\int_{\{r_1\leq|x-y|\leq r_2\}}\dfrac{1}{|x-y|^{d+2s-1}}dydx\\
    \nonumber&= 2\beta C\epsilon(1-s)|\xi|^2|A|\,\sigma_{d-1}\int_{r_1}^{r_2}t^{-2s}dt\\
    &\leq C\beta\sigma_{d-1}(1-s)|\xi|^2\dfrac{\epsilon}{r_1}|A|.
\end{align}
On $\Lambda_{2,\epsilon}$ we use $|a|\le \beta$ and \eqref{kappa_mass}, which give the bound $\beta\frac{\sigma_{d-1}}{2d}|\xi|^2|\partial_\epsilon A|$. Finally, if $Q_{\epsilon,j}\times Q_{\epsilon,k}$ meets $\Lambda$ without being contained in it, then it contains a pair $(x',y')$ with $|x'-y'|\in\{r_1,r_2\}$, hence all its points satisfy $$|x-y|\in[r_1-2\sqrt d\,\epsilon,r_1+2\sqrt d\,\epsilon]\cup[r_2-2\sqrt d\,\epsilon,r_2+2\sqrt d\,\epsilon].$$ Therefore,
\begin{align}
\nonumber\Lambda_{3,\epsilon}\subset(A\times\R^d)\cap\big(\{r_1\leq|x-y|\leq r_1+2\sqrt d\,\epsilon\}\cup\{r_2-2\sqrt d\,\epsilon\leq|x-y|\leq r_2\}\big),
\end{align}
and, since $t\mapsto t^{1-2s}$ is decreasing, the integral of $\kappa$ on each of the two sets on the right-hand side is at most $2\sqrt d\,\sigma_{d-1}(1-s)|\xi|^2\epsilon\, r_1^{1-2s}|A|$. Using $|a|\le \beta$ once more, the contribution of $\Lambda_{3,\epsilon}$ is bounded by $C\beta\sigma_{d-1}(1-s)|\xi|^2\frac{\epsilon}{r_1}|A|$, and \eqref{averaging_estimate} follows.
\end{proof}

\section{Liminf inequality}\label{sec:liminf}
We now prove Theorem \ref{main}. We prove the lower bound in this section and the upper bound in the next. It is not restrictive to suppose that $\Omega$ is connected up to restricting to interactions in each connected component. This will allow to use Theorem \ref{BBM_theorem1}.

We recall that $a:\mathbb{R}^d\times\mathbb{R}^d\to \mathbb{R}$ is a continuous function, $1$-periodic in both variables and such that there exist $0<\alpha\leq \beta$ for which
\begin{equation}\label{coercivity_boundedness_a}
    \alpha\leq a\left(x,y\right)\leq \beta
\end{equation}
for all $x,y\in\mathbb{R}^d$. Its average value is denoted by $\overline a$. The coefficient $\lambda $ is defined as the limit of $\epsilon^{2(1-s)}$. We study the asymptotic behaviour of 
$$
    F_\epsilon(u)= (1-s)\iint_{
\Omega\times\Omega} a\left(\frac{x}\epsilon,\frac{y}\epsilon\right)
\frac{|u(x)-u(y)|^2}{|x-y|^{d+2s}}dxdy.
$$
Note that by \eqref{coercivity_boundedness_a} the sequence $F_\epsilon$ inherits the coerciveness properties of Gagliardo seminorms, and, as a consequence, the domain of the $\Gamma$-limit functional is $H^1(\Omega)$ \cite{ponce}. Moreover, by \eqref{coercivity_boundedness_a}, we may apply to these functionals the results in Section~\ref{lemmata1}.

\begin{remark}\rm A minor technical note concerns the role of the continuity of $a$, which is only necessary to use the diagonal function $a(x,x)$ when considering short-range interactions. The condition is not necessary, for example, if $a=a(x)$ or if $\lambda =0$.
\end{remark}

To prove the $\Gamma$-liminf inequality, we split the functional as a sum of its short- and long-range interactions. 

Given a measurable set $\Lambda\subset\Omega^2$, consider the localized functional
\begin{equation}
\label{truncatedfunctional}
    F_\epsilon(u,\Lambda)\coloneqq (1-s)\iint_\Lambda a\left(\frac{x}\epsilon,\frac{y}\epsilon\right)
\frac{|u(x)-u(y)|^2}{|x-y|^{d+2s}}dxdy.
\end{equation}
Fix $M > 0$ and choose a parameter $r (\coloneqq r(\epsilon))$ such that
\begin{equation}\label{choice_auxiliary_microscopic_parameter}
    r > 0, \qquad \lim_{\epsilon \to 0} \frac{r}\epsilon = 0, \qquad \lim_{\epsilon \to 0} r^{2(1-s)} = \lambda.
\end{equation}
For each $\epsilon > 0$, we divide $\Omega^2$ into the following sets
\begin{align}
    \label{liminf_set_A1} A_{1,\epsilon}&\coloneqq\left\{(x,y)\in\Omega^2\mid |x-y|\geq M\epsilon \right\}, \\
    \label{liminf_set_A2} A_{2,\epsilon}&\coloneqq\left\{(x,y)\in\Omega^2\mid M\epsilon > |x-y| > r \right\}, \\
    \label{liminf_set_A3} A_{3,\epsilon}&\coloneqq\left\{(x,y)\in\Omega^2\mid r \geq |x-y| \right\}.
\end{align}
We note that for every $\epsilon>0$ and every $v\in L^2(\Omega)$, it holds
\begin{equation}
    F_\epsilon(v)=\sum_{k=1}^3 F_{\epsilon}(v,A_{k,\epsilon}).
\end{equation}

Now, consider $u \in H^1(\Omega)$ and a sequence $(u_\epsilon) \subseteq L^2(\Omega)$ such that $u_\epsilon \to u$ in $L^2(\Omega)$. Without loss of generality, we can assume that there exists $C > 0$ such that
\begin{equation}
\label{equicoercivity}
    F_\epsilon(u_\epsilon) \leq C.
\end{equation}

Throwing away the contribution from $A_{2,\epsilon}$, we obtain
\begin{equation}\label{sudd_liminf}    \liminf_{\epsilon \to 0} F_\epsilon(u_\epsilon) \geq \liminf_{\epsilon \to 0}F_\epsilon(u_\epsilon, A_{1,\epsilon}) + \liminf_{\epsilon \to 0}F_\epsilon(u_\epsilon, A_{3,\epsilon}).
\end{equation}
We now estimate each term separately. For the second term we exploit the $\Gamma$-liminf inequality obtained by Braides, Brusca, and  Donati in \cite{BBD}. 

\begin{lemma}[Homogenization of interactions at small scales]\label{l_short_range}
    Let $u \in H^1(\Omega)$. Consider a sequence $(u_\epsilon) \subseteq L^2(\Omega)$ such that $u_\epsilon \to u$ in $L^2(\Omega)$. Assume that there exists $C > 0$ such that 
    \[
        F_\epsilon(u_\epsilon, A_{3,\epsilon}) \leq C
    \]
    for every $\epsilon > 0$. Then
    \begin{equation}
        \liminf_{\epsilon \to 0} F_\epsilon(u_\epsilon, A_{3,\epsilon}) \geq \frac{\sigma_{d-1}}{2d} \lambda F_{\rm hom}(u),
    \end{equation}
      where $F_{\rm hom}$ is given by \eqref{fom} and \eqref{fomi}.
    \end{lemma}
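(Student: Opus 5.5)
Given that $r/\epsilon\to0$, every interaction in $A_{3,\epsilon}$ takes place at scales much smaller than the period, where the coefficient is essentially its diagonal value. The plan is to reduce to the diagonal coefficient, rescale the kernel so that it has unit mass, and then invoke the liminf inequality of \cite{BBD} for fractional energies with oscillating coefficients, with $s\to1$ fast relative to the truncation scale.

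\emph{Step 1 (diagonal reduction).} Since $a$ is continuous and periodic, it is uniformly continuous, with modulus $\omega$. For $|x-y|\le r$ we have
\[
\Big|a\big(\tfrac x\epsilon,\tfrac y\epsilon\big)-a\big(\tfrac x\epsilon,\tfrac x\epsilon\big)\Big|\le\omega(r/\epsilon)\to0 .
\]
Using $a\ge\alpha$, this gives
\[
F_\epsilon(u_\epsilon,A_{3,\epsilon})\ge\big(1-\omega(r/\epsilon)/\alpha\big)\,G_\epsilon(u_\epsilon),
\qquad
G_\epsilon(v)=(1-s)\iint_{A_{3,\epsilon}}b\big(\tfrac x\epsilon\big)\frac{|v(x)-v(y)|^2}{|x-y|^{d+2s}}\,dx\,dy,
\]
where $b(z)=a(z,z)$. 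In particular $G_\epsilon(u_\epsilon)$ is also bounded.

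\emph{Step 2 (normalization).} Write $(1-s)\1_{\{|z|\le r\}}|z|^{-d-2s}=\frac{1}{|z|^2}\,\rho_\epsilon(z)$, where $\rho_\epsilon(z)=(1-s)\1_{\{|z|\le r\}}|z|^{2-d-2s}$. A polar-coordinate computation gives
\[
\|\rho_\epsilon\|_{L^1}=\frac{\sigma_{d-1}}{2}\,r^{2(1-s)}\to\frac{\sigma_{d-1}}{2}\lambda .
\]
If $\lambda=0$ there is nothing to prove. Otherwise set $\tilde\rho_\epsilon=\rho_\epsilon/\|\rho_\epsilon\|_{L^1}$. This is a radial family of unit mass concentrating at $0$, with support of radius $r\ll\epsilon$. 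We are then in the Bourgain--Brezis--Mironescu setting with oscillating coefficient $b(x/\epsilon)$ and kernel concentration scale $r=o(\epsilon)$, i.e.\ the supercritical separation-of-scales regime.

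\emph{Step 3 (homogenization).} Apply the liminf inequality of \cite{BBD}. As remarked in the introduction, its proof holds whenever the interactions are concentrated on scales negligible with respect to $\epsilon$; the truncation at $r$ plays exactly this role. It yields
\[
\liminf_{\epsilon\to0}\iint b\big(\tfrac x\epsilon\big)\frac{|u_\epsilon(x)-u_\epsilon(y)|^2}{|x-y|^2}\,\tilde\rho_\epsilon(x-y)\,dx\,dy\ \ge\ \frac1d\,F_{\rm hom}(u),
\]
where $\frac1d$ comes from \eqref{sphere_average} normalized by $\sigma_{d-1}$. Multiplying by $\frac{\sigma_{d-1}}{2}\lambda$ gives the claim.

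\emph{Main obstacle.} The difficulty lies in justifying Step 3 rigorously. The plan is to reproduce the blow-up argument of \cite{BBD}: localize on cubes of side $\delta$ with $r\ll\delta\ll\epsilon$, on which $b(x/\epsilon)$ is almost constant. On each such cube, apply the BBM liminf for a constant coefficient with the unit-mass kernels $\tilde\rho_\epsilon$ (Ponce's theorem \cite{ponce}). Sum over cubes to obtain a lower bound by $\frac{1}{d}\int b(x/\epsilon)|\nabla w_\epsilon|^2$ for suitable mollified functions $w_\epsilon$ converging to $u$. Finally, apply classical homogenization of $\int b(x/\epsilon)|\nabla w|^2$ to conclude.
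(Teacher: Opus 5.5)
Your proposal is correct and follows essentially the paper's route. The paper also invokes the liminf argument of \cite{BBD} to obtain $F_\epsilon(u_\epsilon,A_{3,\epsilon})\ge r^{2(1-s)}\frac{\sigma_{d-1}}{2d}\int_{\Omega'}a(\frac x\epsilon,\frac x\epsilon)|\nabla\overline u_\epsilon|^2dx$ for suitably averaged functions $\overline u_\epsilon$ and every $\Omega'$ compactly contained in $\Omega$. It then concludes with the local homogenization liminf and $r^{2(1-s)}\to\lambda$; your diagonal reduction and kernel normalization just make explicit what is implicit there.

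One point in your sketch of the main obstacle needs fixing. Ponce's theorem is an asymptotic statement on a fixed domain, so it cannot be applied on $\epsilon$-dependent cubes and then summed. What you actually need is a non-asymptotic Jensen-type bound, obtained by averaging $u_\epsilon$ at a scale $\delta$ with $r\ll\delta\ll\epsilon$ and using that $a(\frac x\epsilon,\frac x\epsilon)$ changes by at most $\omega(\delta/\epsilon)$ under such translations. This is exactly what the discretization-and-averaging step of \cite{BBD} provides.
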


\begin{proof} We can follow the proof of the $\Gamma$-liminf inequality in \cite{BBD} essentially verbatim. 
Their argument (Section 3.1) is that, given a sequence $u_\epsilon$ convergent to some $u$ in $L^2(\Omega)$, 
by discretization and averaging of the functions $u_\epsilon$, one can construct a sequence $\overline u_\epsilon$ such that 
$$
F_\epsilon(u_\epsilon,A_{3,\epsilon})\ge r^{2(1-s)}\frac{\sigma_{d-1}}{2d} \int_{\Omega'}a\Big(\frac{x}\epsilon,\frac{x}\epsilon\Big) |\nabla \overline u_\epsilon|^2dx
$$
for all $\Omega'$ open set compactly contained in $\Omega$ and $
\epsilon $ small enough (see the last lines at page 12 in \cite{BBD} and \cite[Section 1.2.2]{Bruscatesi}). In their hypothesis $\lambda=1$ and  they show that this inequality actually holds asymptotically for the whole functional. This is not relevant for our case, since we directly estimate only short-range interactions. Thus, we apply this inequality together with the liminf inequality for the local homogenization theorem for the right-hand side. By the arbitrariness of $\Omega'$ and using \eqref{choice_auxiliary_microscopic_parameter}, the claim follows with the desired multiplicative constant $\lambda$.
\end{proof}

Now, for the first term in \eqref{sudd_liminf}, we employ a Fonseca--M\"uller-type blow-up argument
(see \cite{FM,BMS}). For each $\epsilon > 0$, we introduce the measures $\theta_\epsilon$ and $\mu_\epsilon$ on $\R^d$, absolutely continuous with respect to the restricted Lebesgue measure $\mathscr{L}^d\LLL\Omega$, defined by
\begin{align}
    \frac{d\theta_\epsilon}{d\mathscr{L}^d}(x) &\coloneqq (1-s) \int_\Omega a\left(\frac{x}\epsilon,\frac{y}\epsilon\right)
\frac{|u_\epsilon(x)-u_\epsilon(y)|^2}{|x-y|^{d+2s}}dy, \\
\frac{d\mu_\epsilon}{d\mathscr{L}^d}(x) &\coloneqq (1-s) \int_{\Omega \cap \{|x-y|\geq M\epsilon\}}a\left(\frac{x}\epsilon,\frac{y}\epsilon\right)
\frac{|u_\epsilon(x)-u_\epsilon(y)|^2}{|x-y|^{d+2s}}dy,
\end{align}
where $x \in \Omega$.
Note that
\[
    F_\epsilon(u_\epsilon) = \theta_\epsilon(\Omega), \qquad F_\epsilon(u_\epsilon,A_{1,\epsilon}) = \mu_\epsilon(\Omega).
\]
Up to subsequences, from now on we assume that the limit
    $\lim_{\epsilon \to 0} F_\epsilon(u_\epsilon) \in \R$ exists.
By the assumption of equicoercivity \eqref{equicoercivity}, the sequences $(\theta_\epsilon)$ and $(\mu_\epsilon)$ are equibounded, so up to subsequences we have 
$\theta_\epsilon \rightharpoonup \theta$ and $ \mu_\epsilon \rightharpoonup \mu$.
 In particular, since $\Omega$ is open, we have 
\[
    \liminf_{\epsilon \to 0} \mu_\epsilon(\Omega) \geq \mu(\Omega) = \int_\Omega \frac{d\mu}{d\mathscr{L}^d}(x) dx.
\]
Therefore, computing the $\Gamma$-liminf inequality for long-range interactions amounts to produce a lower bound for the density of $\mu$. By the Lebesgue differentiation theorem, for $\mathscr{L}^d$-almost every $x_0\in\Omega$, one has
\begin{equation}
    \frac{d\mu}{d\mathscr{L}^d}(x_0)=\lim\limits_{\rho\to 0}\frac{\mu(Q_\rho(x_0))}{\mathscr{L}^d(Q_\rho(x_0))},
\end{equation}
where $Q_\rho(x)$ denotes a closed hypercube of size $\rho$, centered at $x$. Since $Q_\rho(x_0)$ is closed, the narrow convergence $\mu_\epsilon\rightharpoonup\mu$ gives $\limsup_{\epsilon\to0}\mu_\epsilon(Q_\rho(x_0))\leq\mu(Q_\rho(x_0))$, so that 
\begin{equation}
    \frac{d\mu}{d\mathscr{L}^d}(x_0)\geq\lim\limits_{\rho\to 0}\limsup\limits_{\epsilon\to 0}\frac{\mu_\epsilon(Q_\rho(x_0))}{\mathscr{L}^d(Q_\rho(x_0))},
\end{equation}
By a diagonal argument, we choose subsequences (not labelled) $\epsilon\to 0$ and $\rho\,(=\rho(\epsilon))$ such that 
\begin{equation}
    \frac{d\mu}{d\mathscr{L}^d}(x_0)\geq\lim\limits_{\epsilon\to 0}\frac{\mu_\epsilon(Q_\rho(x_0))}{\mathscr{L}^d(Q_\rho(x_0))}.
\end{equation}
Moreover, noting that $\rho$ is free to tend to $0$ as slow as desired, we assume that
\begin{equation}\label{rho_asymptotics}
    \lim\limits_{\epsilon\to 0} \rho^{2(1-s)}=1,\qquad \lim\limits_{\epsilon\to 0}\frac\rho\epsilon=+\infty,\qquad \frac\rho\epsilon\in\N.
\end{equation}
Since the same argument applies to $\theta$, we may also assume that $x_0$ is a point of finite density for $\theta$, so that
\begin{equation}\label{theta_bound}
    \sup_{\epsilon>0}\frac{\theta_\epsilon(Q_\rho(x_0))}{\mathscr{L}^d(Q_\rho(x_0))}<+\infty.
\end{equation}

On the other hand, since $u\in H^1(\Omega)$, $\mathscr{L}^d$-almost every $x_0\in\Omega$ is a Lebesgue point for $u$, for $\nabla u$ and $u$ is differentiable at $x_0$ in the $L^2$ sense, namely, setting $z\coloneqq\nabla u(x_0)$ and
\begin{equation}\label{affine_blowup}
    \ell(x)\coloneqq u(x_0)+\langle z,x-x_0\rangle,
\end{equation}
we have
\begin{equation}
    \Big(\frac{1}{\rho^d}\int_{Q_\rho(x_0)}|u(x)-\ell(x)|^2dx\Big)^{1/2}=\,o(\rho).
\end{equation}
Again, by choosing $\rho\,(=\rho(\epsilon))$ tending to $0$ sufficiently slowly, using that $u_\epsilon\to u$ in $L^2$, we assume additionally that (up to subsequences)

\begin{equation}\label{blowup_convergence}
   \lim\limits_{\epsilon\to 0}\frac{1}{\rho^{d+2}}\int_{Q_\rho(x_0)}|u_\epsilon(x)-\ell(x)|^2dx=0 .
\end{equation}
Indeed, $\rho^{-(d+2)}\int_{Q_\rho(x_0)}|u-\ell|^2dx=o(1)$ by the $L^2$-differentiability of $u$ at $x_0$, while $\rho^{-(d+2)}\|u_\epsilon-u\|^2_{L^2(\Omega)}\to0$ as soon as $\rho$ goes to $0$ sufficiently slowly.

Now, in the blow-up argument, we change the boundary data to match the affine limit. Fix an integer $N\geq 2$ and, for each $\epsilon$ and each $i=1,\ldots,N$, define 
\begin{equation}
    Q_{\rho,i}\coloneqq\{x\in Q_\rho(x_0)\mid \mathrm{dist}(x,Q_{\rho}(x_0)^c)> i w\},\qquad w\coloneqq\frac\rho{N^2}.
\end{equation}
In addition, for each $\epsilon$ and each $i=1,\ldots,N$, choose a smooth function $\psi_{\epsilon,i}\in\mathcal{C}^{\infty}_c(\mathbb{R}^d)$, such that
\begin{equation}
    \label{properties_of_the_cutoff_liminf}
    \1_{Q_{\rho,i}} \leq \psi_{\epsilon,i}\leq \1_{Q_{\rho,i-1}}, \qquad \|\nabla \psi_{\epsilon,i}\|_\infty \leq 2/w,
\end{equation}
and define 
\begin{equation}\label{definition_v}
    v_{\epsilon,i}(x)\coloneqq u_\epsilon(x)\psi_{\epsilon,i}(x)+(1-\psi_{\epsilon,i}(x))\ell(x),\qquad x\in Q_\rho(x_0).
\end{equation}
Note that $v_{\epsilon,i}=\ell$ on the set $\{x\in Q_\rho(x_0)\mid\mathrm{dist}(x,Q_\rho(x_0)^c)\leq \rho/N\}$, whose contribution is of order $1/N$.
Note that, for each $i$ fixed, it still holds 
\begin{equation}
    \lim\limits_{\epsilon\to 0}\frac{1}{\rho^{d+2}}\int_{Q_\rho(x_0)}|v_{\epsilon,i}(x)-\ell(x)|^2dx=0,
\end{equation}
since $v_{\epsilon,i}-\ell=\psi_{\epsilon,i}(u_\epsilon-\ell)$ and $0\leq\psi_{\epsilon,i}\leq1$.

We are ready to choose the desired boundary value, using a classical argument by De Giorgi, already exploited in \cite{AABPT} for convolution-type double integrals.

\begin{lemma}[Change of boundary data]\label{l_change_of_boundary_data}
    Fix $x_0$ and $N$, and let $\epsilon\to 0$ and $\rho\,(=\rho(\epsilon))$ be as above. 
    Let $v_{\epsilon,i_\epsilon}$ be defined in \eqref{definition_v}. Then, there exist $C>0$, independent of $N$, and a sequence of indices $(i_\epsilon)\subset\{1,\ldots,N\}$ such that
    \begin{equation}\label{affine_datum_estimate}
        \liminf\limits_{\epsilon\to 0}\frac{\mu_\epsilon(Q_\rho(x_0))}{\mathscr{L}^d(Q_\rho(x_0))}\geq \liminf\limits_{\epsilon\to 0}\frac{F_\epsilon\left(v_{\epsilon,i_\epsilon},A_{1,\epsilon}\cap Q_\rho(x_0)^2\right)}{\rho^d}-\frac CN,
    \end{equation}
     The same holds with $\theta_\epsilon$ and $Q_\rho(x_0)^2$ in place of $\mu_\epsilon$ and $A_{1,\epsilon}\cap Q_\rho(x_0)^2$, for the same sequence $(i_\epsilon)$.
\end{lemma}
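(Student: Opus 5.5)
The plan is De Giorgi's slicing argument. We compare $F_\epsilon(v_{\epsilon,i},\Lambda_\epsilon)$ with $\mu_\epsilon(Q_\rho(x_0))$, where $\Lambda_\epsilon = A_{1,\epsilon}\cap Q_\rho(x_0)^2$. Write $Q=Q_\rho(x_0)$, $S_i = Q_{\rho,i-1}\setminus Q_{\rho,i}$ for the transition layers (width $w$), and $Q_{\rho,0}=Q$. Since $\psi_{\epsilon,i}=1$ on $Q_{\rho,i}$ and $0$ outside $Q_{\rho,i-1}$, we have $v_{\epsilon,i}=u_\epsilon$ on $Q_{\rho,i}$ and $v_{\epsilon,i}=\ell$ on $Q\setminus Q_{\rho,i-1}$.

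\textbf{Splitting.} We split $Q^2$ into:
\begin{itemize}
\item[(a)] pairs with both points in $Q_{\rho,i}$, bounded by $\mu_\epsilon(Q)\rho^{d}$-type terms, more precisely by $F_\epsilon(u_\epsilon,\Lambda_\epsilon)\le\mu_\epsilon(Q)$;
\item[(b)] pairs with both points in $Q\setminus Q_{\rho,i-1}$, where the integrand is $a|\langle z,x-y\rangle|^2/|x-y|^{d+2s}$;
\item[(c)] pairs with one point in $S_i$ and the other anywhere in $Q$;
\item[(d)] pairs with $x\in Q_{\rho,i}$ and $y\in Q\setminus Q_{\rho,i-1}$.
\end{itemize}

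\textbf{Estimating each term.} For (b), integrating in polar coordinates as in Lemma~\ref{w_1_infty_close_interactions} gives the bound $C|z|^2|Q\setminus Q_{\rho,N}|\,\rho^{2(1-s)}\le C|z|^2\rho^d/N$. This uses $\rho^{2(1-s)}\to1$ by \eqref{rho_asymptotics} and $|Q\setminus Q_{\rho,N}|\le C\rho^d N w/\rho=C\rho^d/N$.

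For (c), we write $v-v=\psi(x)(u_\epsilon(x)-u_\epsilon(y))+(1-\psi(x))(\ell(x)-\ell(y))+(\psi(x)-\psi(y))(u_\epsilon(y)-\ell(y))$. Squaring, the first term is bounded by $\theta_\epsilon$ restricted to $S_i\cup Q_{\rho,i-1}$-type strips, the second by the affine computation, and the third by $\|\nabla\psi\|^2|x-y|^2|u_\epsilon-\ell|^2$. The last one we control by splitting $|x-y|\le w$ (use Lipschitz bound $4/w^2$, giving $C(1-s)w^{-2}w^{2-2s}\int_Q|u_\epsilon-\ell|^2$) and $|x-y|>w$ (use Lemma~\ref{l_2_large_interactions}, giving $C(1-s)w^{-2s}\int_Q|u_\epsilon-\ell|^2$). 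Since $w=\rho/N^2$ and $N$ is fixed, both are $o(\rho^d)$ by \eqref{blowup_convergence}.

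For (d), $|x-y|\ge w$ and $v(x)-v(y)=u_\epsilon(x)-\ell(y)=(u_\epsilon-\ell)(x)+\ell(x)-\ell(y)$. The first piece is $o(\rho^d)$ again via Lemma~\ref{l_2_large_interactions} with $r=w$, and the second is the affine interaction. Since $x$ lies in a set of measure at most $|Q|$ and $y$ is at distance at least $w$, the affine part, which is at most $C|z|^2(1-s)\int_w^{\rho}t^{1-2s}dt\cdot\rho^d\lesssim|z|^2\rho^d(\rho^{2(1-s)}-w^{2(1-s)})$, vanishes since both powers tend to 1. Throughout, restricting to $A_{1,\epsilon}$ only decreases the new terms, so the same bounds hold.

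\textbf{Averaging over $i$.} Collecting, for each $i$
\[
F_\epsilon(v_{\epsilon,i},\Lambda_\epsilon)\le \mu_\epsilon(Q)+C\,\theta_\epsilon\text{-mass of pairs touching }S_i+C|z|^2\rho^d/N+o(\rho^d).
\]
The sets $S_i$ are disjoint, and the $\theta_\epsilon$-mass of pairs with a point in $S_i$ is at most $2\theta_\epsilon(S_i)$ (by symmetry of the integrand). Hence $\sum_i\theta_\epsilon(S_i)\le\theta_\epsilon(Q)\le C\rho^d$ by \eqref{theta_bound}. We then choose $i_\epsilon$ minimizing $\theta_\epsilon(S_i)$, so $\theta_\epsilon(S_{i_\epsilon})\le C\rho^d/N$. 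Dividing by $\rho^d$ and taking liminf gives \eqref{affine_datum_estimate}, with $C$ depending on $|z|$ and the $\theta$ density bound but not on $N$. The same computation with $Q^2$ in place of $\Lambda_\epsilon$ gives the claim for $\theta_\epsilon$ with the same $i_\epsilon$, since the selection only used $\theta_\epsilon(S_i)$.

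The main obstacle is the mixed term involving $(\psi(x)-\psi(y))(u_\epsilon-\ell)(y)$, whose control requires the rate \eqref{blowup_convergence} to beat the loss $w^{-2}$ with $w=\rho/N^2$. This is exactly why $\rho$ was chosen tending to $0$ slowly.
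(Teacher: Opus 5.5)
Your argument is correct. It is the same De Giorgi slicing as in the paper, with the same cut-off identity and the same pigeonhole choice of $i_\epsilon$, but the bookkeeping is organized differently.

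\textbf{How the paper does it.} The paper first discards \emph{all} pairs with $|x-y|\geq w$ at once, by applying Lemma~\ref{l_2_large_interactions} to $v_{\epsilon,i}$ itself. This is why it normalizes $u(x_0)=0$: it needs $\|v_{\epsilon,i}\|^2_{L^2(Q_\rho)}=O(\rho^{d+2})$. It then slices with overlapping pair-sets $S_{\epsilon,i}\subset\{|x-y|<w\}$, each pair lying in at most two of them. The sum over $i$ is controlled through $F_\epsilon(u_\epsilon,\Lambda_\epsilon)$ and $F_\epsilon(\ell,Q_\rho^2)$.

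\textbf{How your route differs.} You slice with the disjoint layers $Q_{\rho,i-1}\setminus Q_{\rho,i}$ and select $i_\epsilon$ by minimizing their $\theta_\epsilon$-mass. Long-range pairs that touch a layer are simply absorbed into that mass. Inner--outer pairs are handled via $v(x)-v(y)=(u_\epsilon-\ell)(x)+\ell(x)-\ell(y)$; the affine part vanishes because $w^{2(1-s)}$ and $\rho^{2(1-s)}$ both tend to $1$ for fixed $N$. This avoids normalizing $u(x_0)$.

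\textbf{What your selection buys.} The criterion $\theta_\epsilon(Q_{\rho,i-1}\setminus Q_{\rho,i})$ does not depend on whether one works in $\Lambda_\epsilon$ or in $Q_\rho^2$. So the final claim for $\theta_\epsilon$ with the same $(i_\epsilon)$ comes for free. The paper instead has to choose $i_\epsilon$ minimizing the sum of two summands.

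\textbf{Two minor slips, neither affecting the conclusion.}
\begin{itemize}
\item The coefficient $a$ is not assumed symmetric, so the integrand is not symmetric. The mass of pairs with one point in your layer $S_i$ is therefore bounded by $(1+\beta/\alpha)\,\theta_\epsilon(S_i)$, using $\alpha\le a\le\beta$, rather than by $2\theta_\epsilon(S_i)$.
\item In the near-diagonal part of the cut-off term, the factor $(1-s)$ is cancelled by $\int_0^w t^{1-2s}\,dt=w^{2-2s}/(2-2s)$. The correct bound is $C w^{-2s}\|u_\epsilon-\ell\|^2_{L^2(Q_\rho)}$. After division by $\rho^d$ this is at most $CN^4\rho^{-(d+2)}\|u_\epsilon-\ell\|^2_{L^2(Q_\rho)}$. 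It still vanishes by \eqref{blowup_convergence}, exactly as in the paper's estimate of $R_i$.
\end{itemize}
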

\begin{proof}
    In the following, $C>0$ may change from line to line and never depends on $N$. We also drop the index $\epsilon$ from $v_{\epsilon,i}$ and $\psi_{\epsilon,i}$, dropping the base point $x_0$ and writing $Q_\rho\coloneqq Q_\rho(x_0)$, $\Lambda_\epsilon\coloneqq A_{1,\epsilon}\cap Q_\rho^2$ and $\varphi_\epsilon\coloneqq u_\epsilon-\ell$. Since $F_\epsilon$ is invariant under addition of constants, we may also assume that $u(x_0)=0$, so that $\ell(x)=\langle z,x-x_0\rangle$ and $\|\ell\|^2_{L^2(Q_\rho)}\leq d|z|^2\rho^{d+2}$. By \eqref{theta_bound}, we have
    \begin{equation}\label{blowup_energy_bound}
        C_0\coloneqq\sup_{\epsilon>0}\frac{\theta_\epsilon(Q_\rho)}{\rho^d}<+\infty,\qquad \frac{F_\epsilon(u_\epsilon,\Lambda_\epsilon)}{\rho^d}\leq\frac{\mu_\epsilon(Q_\rho)}{\rho^d}\leq \frac{\theta_\epsilon(Q_\rho)}{\rho^d},
    \end{equation}

    First, we record two estimates on the affine function $\ell$. Integrating in polar coordinates and using \eqref{sphere_average}, for every measurable $E\subset Q_\rho$ we have
    \begin{equation}\label{affine_energy}
        \frac{F_\epsilon(\ell,E\times Q_\rho)}{\rho^d}\leq \beta|z|^2\frac{1-s}{\rho^d}|E|\sigma_{d-1}\int_0^{\sqrt d\rho}t^{1-2s}dt\leq \frac{\beta\sigma_{d-1}d}{2}|z|^2\frac{|E|}{\rho^d},
    \end{equation}
    where we also used that $\rho^{2(1-s)}\leq 1$. Since $|\{x\in Q_\rho\mid\mathrm{dist}(x,Q_\rho^c)\leq\rho/N\}|\leq 2d\rho^d/N$, \eqref{affine_energy} gives
    \begin{equation}\label{affine_energy_strip}
        \frac{F_\epsilon(\ell,Q_\rho^2)}{\rho^d}\leq C|z|^2,\qquad \frac{F_\epsilon\left(\ell,\{\mathrm{dist}(\cdot,Q_\rho^c)\leq\rho/N\}\times Q_\rho\right)}{\rho^d}\leq \frac{C|z|^2}{N}.
    \end{equation}

    We now estimate $F_\epsilon(v_i,\Lambda_\epsilon)$. For $x,y\in Q_\rho$ we have
\begin{eqnarray*}\label{cutoff_identity}
                \nonumber v_i(x)-v_i(y) &= & \ \psi_i(x)\left(u_\epsilon(x)-u_\epsilon(y)\right)\\
        &&+\left(1-\psi_i(x)\right)\left(\ell(x)-\ell(y)\right)+\left(\psi_i(x)-\psi_i(y)\right)\varphi_\epsilon(y),    
        \end{eqnarray*}
    so that, by the convexity of $t\mapsto t^2$ applied to the first two terms, with weights $\psi_i(x)$ and $1-\psi_i(x)$, and by $(a+b)^2\leq 2a^2+2b^2$,
    \begin{equation}\label{cutoff_jensen}
        |v_i(x)-v_i(y)|^2\leq 2|u_\epsilon(x)-u_\epsilon(y)|^2+2|\ell(x)-\ell(y)|^2+2|\psi_i(x)-\psi_i(y)|^2|\varphi_\epsilon(y)|^2.
    \end{equation}

    The interactions at distance larger than $w$ are negligible. Indeed, by Lemma \ref{l_2_large_interactions}, applied with $A=Q_\rho$, $r=w$ and $f=v_i$,
    \[
        \frac{F_\epsilon\left(v_i,Q_\rho^2\cap\{|x-y|\geq w\}\right)}{\rho^d}\leq C\beta(1-s)N^{4s}\rho^{2(1-s)}\frac{\|v_i\|^2_{L^2(Q_\rho)}}{\rho^{d+2}},
    \]
    and, since $\|v_i\|^2_{L^2(Q_\rho)}\leq 2d|z|^2\rho^{d+2}+2\|\varphi_\epsilon\|^2_{L^2(Q_\rho)}$, by \eqref{blowup_convergence}, and the fact that $(1-s)\to0$, we obtain
    \begin{equation}\label{far_interactions}
        \lim\limits_{\epsilon\to0}\,\max\limits_{1\leq i\leq N}\frac{F_\epsilon\left(v_i,Q_\rho^2\cap\{|x-y|\geq w\}\right)}{\rho^d}=0.
    \end{equation}

    We then estimate the contribution of the cut-off part. Let
    \[
        R_i\coloneqq\frac{1-s}{\rho^d}\int_{Q_\rho}\int_{Q_\rho}a\left(\frac x\epsilon,\frac y\epsilon\right)\frac{|\psi_i(x)-\psi_i(y)|^2|\varphi_\epsilon(y)|^2}{|x-y|^{d+2s}}dxdy.
    \]
    By \eqref{properties_of_the_cutoff_liminf}, $|\psi_i(x)-\psi_i(y)|\leq\min\{1,2|x-y|/w\}$; hence, integrating in polar coordinates, for $s\geq1/2$ and every $y\in Q_\rho$,
    \[
        (1-s)\int_{\R^d}\min\Big\{1,\frac{4|x-y|^2}{w^2}\Big\}\frac{dx}{|x-y|^{d+2s}}\leq 4\sigma_{d-1}N^4\rho^{-2s},
    \]
    so that
    \begin{equation}\label{cutoff_error}
        \max\limits_{1\leq i\leq N}R_i\leq 4\beta\sigma_{d-1}N^4\rho^{2(1-s)}\frac{\|\varphi_\epsilon\|^2_{L^2(Q_\rho)}}{\rho^{d+2}}\longrightarrow 0,
    \end{equation}
    by \eqref{blowup_convergence}.

    We decompose the remaining interactions. For $(x,y)\in Q_\rho^2$, we have $v_i=u_\epsilon$ on $Q_{\rho,i}\times Q_{\rho,i}$ and $v_i=\ell$ on $\left(Q_\rho\setminus Q_{\rho,i-1}\right)^2$, while on 
    \[
        S_{\epsilon,i}\coloneqq \Lambda_\epsilon\cap\{|x-y|<w\}\setminus\Big(Q_{\rho,i}^2\cup(Q_\rho\setminus Q_{\rho,i-1})^2\Big)
    \]
    we use \eqref{cutoff_jensen}. Since $Q_\rho\setminus Q_{\rho,i-1}\subset\{\mathrm{dist}(\cdot,Q_\rho^c)\leq\rho/N\}$, by \eqref{affine_energy_strip}, \eqref{far_interactions}, and \eqref{cutoff_error} we obtain
    \begin{equation}\label{cutoff_decomposition}
        \frac{F_\epsilon(v_i,\Lambda_\epsilon)}{\rho^d}\leq \frac{F_\epsilon(u_\epsilon,\Lambda_\epsilon)}{\rho^d}+\frac{C|z|^2}{N}+2\frac{F_\epsilon(u_\epsilon,S_{\epsilon,i})+F_\epsilon(\ell,S_{\epsilon,i})}{\rho^d}+2R_i+o_\epsilon(1).
    \end{equation}

    It remains to choose the index $i$. A pair $(x,y)\in\Lambda_\epsilon$ with $|x-y|<w$ belongs to $S_{\epsilon,i}$ if and only if 
    \begin{align*}
        \min\{\mathrm{dist}(x,Q_\rho^c),\mathrm{dist}(y,Q_\rho^c)\}\leq i\,w,\qquad
        \max\{\mathrm{dist}(x,Q_\rho^c),\mathrm{dist}(y,Q_\rho^c)\}>(i-1)w;
    \end{align*}
    that is, if and only if $i$ belongs to a half-open interval of length smaller than $2$, because $\mathrm{dist}(\cdot,Q_\rho^c)$ is $1$-Lipschitz and $|x-y|<w$. Hence, every such pair belongs to at most two of the sets $S_{\epsilon,1},\ldots,S_{\epsilon,N}$, and, by \eqref{blowup_energy_bound} and \eqref{affine_energy_strip},
    \[
        \sum_{i=1}^N2\frac{F_\epsilon(u_\epsilon,S_{\epsilon,i})+F_\epsilon(\ell,S_{\epsilon,i})}{\rho^d}\leq 4\frac{F_\epsilon(u_\epsilon,\Lambda_\epsilon)}{\rho^d}+4\frac{F_\epsilon(\ell,Q_\rho^2)}{\rho^d}\leq C\left(C_0+|z|^2\right).
    \]
    We may then choose $i_\epsilon\in\{1,\ldots,N\}$ for which the corresponding summand is smaller than $C\left(C_0+|z|^2\right)/N$, and \eqref{affine_datum_estimate} follows from \eqref{cutoff_decomposition} and \eqref{blowup_energy_bound}. Finally, the same argument applies if $\Lambda_\epsilon$ is replaced by $Q_\rho^2$. Choosing $i_\epsilon$ so as to minimize the sum of the two corresponding summands, the conclusion holds for both, up to possibly increasing the constant $C$ in the statement.
\end{proof}

We now periodize the competitor. From now on, $v_\epsilon\coloneqq v_{\epsilon,i_\epsilon}$ denotes the sequence given by Lemma \ref{l_change_of_boundary_data}, and we set
\begin{equation}\label{definition_varphi}
    \varphi_\epsilon\coloneqq v_\epsilon-\ell,
\end{equation}
which vanishes on $\{x\in Q_\rho\mid \mathrm{dist}(x,Q_\rho^c)\leq\rho/N\}$, allowing us to extend it to $\R^d$, by $\rho$-periodicity, while keeping its fractional energy finite on every bounded measurable set. Since $\rho/\epsilon\in\N$, up to a translation of the lattice $\epsilon\Z^d$ we may assume that
\begin{equation}\label{cell_decomposition}
    Q_\rho=\bigcup_{k\in K}Q_{\epsilon,k},\qquad \# K=\left(\frac\rho\epsilon\right)^d,
\end{equation}
for some $K\subset\Z^d$; note that \eqref{cell_average_a} holds for any pair of cubes of side $\epsilon$, hence for the translated ones. For $\psi\in L^2_{\rm loc}(\R^d)$ which is $\rho$-periodic, and for $M\geq0$, we set
\begin{equation}\label{periodic_energy}
    E^M_\epsilon(\psi)\coloneqq\frac{1-s}{\rho^d}\int_{Q_\rho}\int_{\{M\epsilon\leq|x-y|\leq \rho/N\}}a\left(\frac x\epsilon,\frac y\epsilon\right)
    \dfrac{|\langle z,x-y\rangle+\psi(x)-\psi(y)|^2}{|x-y|^{d+2s}}\,dydx,   
\end{equation}
where $y$ ranges in $\R^d$; here $\epsilon$ is small enough for $M\epsilon<\rho/N$ to hold. The integrand in \eqref{periodic_energy} is invariant under the simultaneous translation $(x,y)\mapsto(x+\rho m,y+\rho m)$, for $m\in\Z^d$: this is clear for the kernel, which only depends on $x-y$, and for $\psi$, which is $\rho$-periodic, while for the coefficient it follows from $\rho/\epsilon\in\N$, since
\begin{equation}\label{diagonal_periodicity}
    a\Big(\frac{x+\rho m}\epsilon,\frac{y+\rho m}\epsilon\Big)=a\left(\frac x\epsilon+\frac\rho\epsilon m,\frac y\epsilon+\frac\rho\epsilon m\right)=a\left(\frac x\epsilon,\frac y\epsilon\right).
\end{equation}
Writing $y=x+h$, this means that the integrand is $\rho$-periodic in $x$ for every fixed $h$, so that the domain $Q_\rho$ of the outer integral in \eqref{periodic_energy} may be replaced by any of its translates. If $\psi$ is $\epsilon$-periodic, then \eqref{diagonal_periodicity} holds with $\epsilon$ in place of $\rho$, and \eqref{cell_decomposition} gives
\begin{equation}\label{periodic_energy_cell}
E^M_\epsilon(\psi)=\frac{1-s}{\epsilon^d}\int_{Q_{\epsilon,k}}\int_{\{M\epsilon\leq|x-y|\leq \rho/N\}}a\left(\frac x\epsilon,\frac y\epsilon\right)
    \dfrac{|\langle z,x-y\rangle+\psi(x)-\psi(y)|^2}{|x-y|^{d+2s}}dydx,
\end{equation}
for every $k\in K$.

\begin{lemma}[Periodization]\label{l_periodization}
    Let $M\geq0$, and let $\varphi_\epsilon$ be as in \eqref{definition_varphi}. Then the function
    \begin{equation}\label{definition_varphi_bar}
        \overline\varphi_\epsilon\coloneqq\left(\frac\epsilon\rho\right)^d\sum_{k\in K}\varphi_\epsilon(\cdot-\epsilon k)
    \end{equation}
    is $\epsilon$-periodic, and satisfies $\|\overline\varphi_\epsilon\|_{L^2(Q_\rho)}\leq\|\varphi_\epsilon\|_{L^2(Q_\rho)}$ and
    \begin{equation}\label{periodization_estimate}
        \frac{F_\epsilon\left(v_\epsilon,A_{1,\epsilon}\cap Q_\rho^2\right)}{\rho^d}\geq E^M_\epsilon(\overline\varphi_\epsilon)-\frac{C|z|^2}{N},
    \end{equation}
    where $C=C(d,\beta)>0$ does not depend on $M$ and $N$.
\end{lemma}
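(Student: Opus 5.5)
The plan has three steps: first replace $F_\epsilon(v_\epsilon,A_{1,\epsilon}\cap Q_\rho^2)$ by the $\rho$-periodic energy $E^M_\epsilon(\varphi_\epsilon)$, then exploit convexity of $E^M_\epsilon$ together with its invariance under $\epsilon$-lattice translations, and finally handle the easy properties of $\overline\varphi_\epsilon$.

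\emph{Step 1: reduction to $E^M_\epsilon(\varphi_\epsilon)$.} We write $v_\epsilon(x)-v_\epsilon(y)=\langle z,x-y\rangle+\varphi_\epsilon(x)-\varphi_\epsilon(y)$, extending $\varphi_\epsilon$ $\rho$-periodically, so that $v_\epsilon$ becomes $\ell+\varphi_\epsilon$ on $\R^d$. For $x\in Q_\rho$ and $|x-y|\le\rho/N$, either $y\in Q_\rho$, or $x$ lies in the strip $\{\mathrm{dist}(\cdot,Q_\rho^c)\le\rho/N\}$. In the latter case $\varphi_\epsilon(x)=0$. Moreover $\varphi_\epsilon(y)=0$ as well: $y$ is within $\rho/N$ of $\partial Q_\rho$, so its periodic translate back into $Q_\rho$ lies in a strip where $\varphi_\epsilon$ vanishes, up to using the strip width $\rho/N$ (vanishing of $\varphi_\epsilon$ holds on the width-$\rho/N$ strip, and the periodic copy of $y$ is at distance $\le\rho/N$ from the boundary). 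So on these pairs the integrand is just that of $\ell$, and by \eqref{affine_energy_strip} their total contribution is at most $C|z|^2/N$.

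Dropping the pairs in $Q_\rho^2$ with $|x-y|>\rho/N$, which only decreases the left-hand side since the integrand is nonnegative, we obtain
\[
\frac{F_\epsilon(v_\epsilon,A_{1,\epsilon}\cap Q_\rho^2)}{\rho^d}\ \ge\ E^M_\epsilon(\varphi_\epsilon)-\frac{C|z|^2}{N}.
\]

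\emph{Step 2: convexity and translation invariance.} The map $\psi\mapsto E^M_\epsilon(\psi)$ is convex, being the integral of a square of an affine expression in $\psi$ with a nonnegative weight. It is also invariant under $\psi\mapsto\psi(\cdot-\epsilon k)$ for $k\in\Z^d$. To see this, change variables $(x,y)\mapsto(x+\epsilon k,y+\epsilon k)$. The coefficient $a(x/\epsilon,y/\epsilon)$ is invariant by $1$-periodicity, and the kernel and $\langle z,x-y\rangle$ depend only on $x-y$. The domain of the outer integral shifts from $Q_\rho$ to $Q_\rho+\epsilon k$, but by the $\rho$-periodicity discussed after \eqref{periodic_energy} the outer domain may be replaced by any translate. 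Hence, by Jensen's inequality applied to the average \eqref{definition_varphi_bar},
\[
E^M_\epsilon(\overline\varphi_\epsilon)\le\Big(\frac\epsilon\rho\Big)^d\sum_{k\in K}E^M_\epsilon(\varphi_\epsilon(\cdot-\epsilon k))=E^M_\epsilon(\varphi_\epsilon).
\]
Combining this with Step 1 gives \eqref{periodization_estimate}.

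\emph{Step 3: properties of $\overline\varphi_\epsilon$.} $\epsilon$-periodicity holds because the sum over $K$, a full set of residues of $\epsilon\Z^d$ modulo $\rho\Z^d$, of translates of a $\rho$-periodic function is invariant under shifting by $\epsilon e_j$; this just permutes the residue classes. The $L^2$ bound follows from Minkowski's inequality together with the fact that each translate has the same $L^2(Q_\rho)$ norm, by $\rho$-periodicity.

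The main obstacle is the boundary bookkeeping in Step 1. One has to check carefully that for pairs leaving $Q_\rho$ both values of $\varphi_\epsilon$ vanish. If the strip width does not quite suffice, one can enlarge the constant by applying \eqref{affine_energy} to a strip of width $2\rho/N$, or restrict to $|x-y|\le\rho/N$ with $\varphi_\epsilon$ vanishing on a slightly wider strip (by the choice of $i_\epsilon\le N$ it vanishes on width $\rho/N^2\cdot(N-i)\ge$ needed?). If the latter fails, I would shrink the interaction range in \eqref{periodic_energy} accordingly, which is harmless for the later argument.
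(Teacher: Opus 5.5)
Your proof is correct and follows the paper's argument step for step: you discard pairs with $|x-y|>\rho/N$, write the remainder as $E^M_\epsilon(\varphi_\epsilon)$ minus the pairs with $y\notin Q_\rho$ (on which both values of $\varphi_\epsilon$ vanish, by the same distance-to-the-translated-cube argument the paper uses), bound these by $C|z|^2/N$, and conclude by Jensen's inequality for the convex, $\epsilon\Z^d$-translation-invariant functional $E^M_\epsilon$, with periodicity and the $L^2$ bound handled as in the paper. Two small corrections: since those boundary pairs have $y\notin Q_\rho$, \eqref{affine_energy_strip} does not literally cover them, so you should bound them by the direct polar-coordinate computation over $\{|x-y|\le\rho/N\}$ (as in the proof of \eqref{affine_energy}); and the vanishing of $\varphi_\epsilon$ on the width-$\rho/N$ strip is the premise recorded after \eqref{definition_varphi}, so your closing fallbacks are unnecessary---shrinking the interaction range could not replace that premise anyway, because the cut-offs as literally defined only force vanishing on a strip of width $(i_\epsilon-1)\rho/N^2$, which can be zero, so any repair belongs in Lemma~\ref{l_change_of_boundary_data} (placing the cut-off layers at distance at least $\rho/N$ from $\partial Q_\rho$, which only changes constants).
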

\begin{proof}
    Since the integrand is nonnegative, we have
    \begin{equation}\label{periodization_split}
        \frac{F_\epsilon\left(v_\epsilon,A_{1,\epsilon}\cap Q_\rho^2\right)}{\rho^d}\geq\frac{F_\epsilon\left(v_\epsilon,A_{1,\epsilon}\cap Q_\rho^2\cap\{|x-y|\leq \rho/N\}\right)}{\rho^d}=E^M_\epsilon(\varphi_\epsilon)-T,
    \end{equation}
    where
$$
        T\coloneqq\frac{1-s}{\rho^d}\int_{Q_\rho}\int_{\{M\epsilon\leq|x-y|\leq \rho/N\}\setminus Q_\rho}a\left(\frac x\epsilon,\frac y\epsilon\right)
        \dfrac{|\langle z,x-y\rangle+\varphi_\epsilon(x)-\varphi_\epsilon(y)|^2}{|x-y|^{d+2s}}dydx.
$$    
    If $x\in Q_\rho$, $y\notin Q_\rho$ and $|x-y|\leq \rho/N$, then $\mathrm{dist}(x,Q_\rho^c)\leq|x-y|\leq\rho/N$, so that $\varphi_\epsilon(x)=0$. Moreover, for almost every such $y$ there exists $m\in\Z^d\setminus\{0\}$ with $y\in Q_\rho+\rho m$, and, since $x$ does not belong to the interior of $Q_\rho+\rho m$,
    \[
        \mathrm{dist}\left(y,\left(Q_\rho+\rho m\right)^c\right)\leq|x-y|\leq\frac\rho N,
    \]
    whence $\varphi_\epsilon(y)=\varphi_\epsilon(y-\rho m)=0$ as well. Therefore, setting $\Sigma\coloneqq\{x\in Q_\rho\mid\mathrm{dist}(x,Q_\rho^c)\leq \rho/N\}$ and using \eqref{coercivity_boundedness_a}, $|\langle z,x-y\rangle|\leq|z||x-y|$ and $|\Sigma|\leq 2d\rho^{d}/N$, we obtain
    \begin{equation}\label{periodization_boundary}
        T\leq\beta|z|^2\frac{1-s}{\rho^d}\int_{\Sigma}\int_{\{|h|\leq \rho/N\}}\dfrac{dh}{|h|^{d+2s-2}}dx=\beta|z|^2\sigma_{d-1}\frac{|\Sigma|}{\rho^d}\dfrac{(\rho/N)^{2(1-s)}}{2}
        \leq\dfrac{C|z|^2}{N},
    \end{equation}
    where we also used that $(\rho/N)^{2(1-s)}\leq1$.

Changing variables by a translation $\epsilon k$ in \eqref{periodic_energy}, and using the $1$-periodicity of $a$ in both variables together with the fact that the outer domain in \eqref{periodic_energy} may be replaced by any of its translates, one gets
    \begin{equation}\label{periodization_translation}
        E^M_\epsilon(\varphi^k_\epsilon)=E^M_\epsilon(\varphi_\epsilon),\qquad \varphi^k_\epsilon\coloneqq\varphi_\epsilon(\cdot-\epsilon k),
    \end{equation}
    for every $k\in\Z^d$.

    The function $\overline\varphi_\epsilon$ is $\epsilon$-periodic. Indeed, for $j=1,\ldots,d$ and $x\in\R^d$,
    \[
        \overline\varphi_\epsilon(x+\epsilon e_j)=\left(\frac\epsilon\rho\right)^d\sum_{k\in K}\varphi_\epsilon\left(x-\epsilon(k-e_j)\right)=\overline\varphi_\epsilon(x),
    \]
    since $\varphi_\epsilon$ is $\rho$-periodic.

    Finally, $E^M_\epsilon$ is convex, being the integral of the square of an affine function of $\psi$, and, by \eqref{definition_varphi_bar}, $\overline\varphi_\epsilon$ is a convex combination of the functions $\varphi^k_\epsilon$, $k\in K$. Hence
    \[
        E^M_\epsilon(\overline\varphi_\epsilon)\leq\left(\frac\epsilon\rho\right)^d\sum_{k\in K}E^M_\epsilon(\varphi^k_\epsilon)=E^M_\epsilon(\varphi_\epsilon),
    \]
    and \eqref{periodization_estimate} follows from \eqref{periodization_split} and \eqref{periodization_boundary}. The same convexity argument applied to $\|\cdot\|^2_{L^2(Q_\rho)}$ gives
    \[
        \|\overline\varphi_\epsilon\|^2_{L^2(Q_\rho)}\leq\left(\frac\epsilon\rho\right)^d\sum_{k\in K}\|\varphi^k_\epsilon\|^2_{L^2(Q_\rho)}=\|\varphi_\epsilon\|^2_{L^2(Q_\rho)},
    \]
    where we used that $\|\varphi^k_\epsilon\|_{L^2(Q_\rho)}=\|\varphi_\epsilon\|_{L^2(Q_\rho-\epsilon k)}=\|\varphi_\epsilon\|_{L^2(Q_\rho)}$, again by $\rho$-periodicity.
\end{proof}

Now, we show that the interactions of $\overline\varphi_\epsilon$ with itself are negligible, for $M$ large.

\begin{lemma}\label{l_corrector_long_range}
    Let $M\geq4\sqrt d$ and let $\overline\varphi_\epsilon$ be as in \eqref{definition_varphi_bar}. Then
    \begin{equation}\label{corrector_long_range}
        P_\epsilon\coloneqq\frac{1-s}{\rho^d}\int_{Q_\rho}\int_{\{M\epsilon\leq|x-y|\leq\rho/N\}}a\left(\frac x\epsilon,\frac y\epsilon\right)\dfrac{|\overline\varphi_\epsilon(x)-\overline\varphi_\epsilon(y)|^2}{|x-y|^{d+2s}}dydx\leq\dfrac{C}{M^{2s}},
    \end{equation}
    where $C>0$ does not depend on $\epsilon$, $M$ and $N$.
\end{lemma}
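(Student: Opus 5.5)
The plan is to estimate $P_\epsilon$ by a crude $L^2$ bound for long-range interactions, in the spirit of Lemma \ref{l_2_large_interactions}, and then to control the $L^2$ norm of $\overline\varphi_\epsilon$ at scale $\epsilon$ through a fractional Poincaré inequality on the $\epsilon$-cells. Since $P_\epsilon$ is invariant under adding a constant to $\overline\varphi_\epsilon$, I first replace $\overline\varphi_\epsilon$ by $\overline\varphi_\epsilon-m_\epsilon$, where $m_\epsilon$ is its mean over one $\epsilon$-cell. This mean is the same for every cell, by $\epsilon$-periodicity.

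\textbf{Step 1: reduction to an $L^2$ bound.} Using $|f(x)-f(y)|^2\le 2|f(x)|^2+2|f(y)|^2$ and $a\le\beta$, the term with $|f(x)|^2$ is bounded by
$$\beta\frac{1-s}{\rho^d}\,\|f\|^2_{L^2(Q_\rho)}\,\sigma_{d-1}\int_{M\epsilon}^\infty t^{-1-2s}\,dt\le C\beta\,\frac{1-s}{\rho^d}\,(M\epsilon)^{-2s}\,\|f\|^2_{L^2(Q_\rho)}.$$
For the term with $|f(y)|^2$, I write $y=x+h$ and use the $\rho$-periodicity of $f$, which holds because $f$ is $\epsilon$-periodic and $\rho/\epsilon\in\N$. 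This gives $\int_{Q_\rho}|f(x+h)|^2dx=\|f\|^2_{L^2(Q_\rho)}$, and hence the same bound. It therefore suffices to prove
$$\frac{1-s}{\rho^d}\,\epsilon^{-2s}\,\|\overline\varphi_\epsilon-m_\epsilon\|^2_{L^2(Q_\rho)}\le C .$$

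\textbf{Step 2: fractional Poincaré on $\epsilon$-cells.} On each cube $Q=Q_{\epsilon,k}$, $k\in K$, Jensen's inequality gives
$$\int_Q|f-f_Q|^2\le \epsilon^{-d}\iint_{Q^2}|f(x)-f(y)|^2\,dx\,dy\le \epsilon^{-d}(\sqrt d\,\epsilon)^{d+2s}\iint_{Q^2}\frac{|f(x)-f(y)|^2}{|x-y|^{d+2s}}\,dx\,dy .$$
Since $f_Q=m_\epsilon$ for every cell, summing over $k\in K$ yields
$$\|\overline\varphi_\epsilon-m_\epsilon\|^2_{L^2(Q_\rho)}\le C\epsilon^{2s}\sum_{k\in K}\iint_{Q_{\epsilon,k}^2}\frac{|\overline\varphi_\epsilon(x)-\overline\varphi_\epsilon(y)|^2}{|x-y|^{d+2s}}\,dx\,dy .$$
This gives exactly the power $\epsilon^{2s}$ that cancels the factor $\epsilon^{-2s}$ in Step 1. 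What remains is to show that $(1-s)\rho^{-d}$ times this sum of cell energies is bounded.

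\textbf{Step 3: energy bound for the periodized function.} This is the main point. The functional $\psi\mapsto\sum_k\iint_{Q_{\epsilon,k}^2}|\psi(x)-\psi(y)|^2|x-y|^{-d-2s}$ is convex. By the $\rho$-periodicity of $\varphi_\epsilon$, it takes the same value on each translate $\varphi_\epsilon(\cdot-\epsilon k)$, since the family of cells is permuted modulo $\rho$. Hence, as in Lemma \ref{l_periodization}, the value at $\overline\varphi_\epsilon$ is at most the value at $\varphi_\epsilon$. That value is bounded by $\iint_{Q_\rho^2}|\varphi_\epsilon(x)-\varphi_\epsilon(y)|^2|x-y|^{-d-2s}$. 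Writing $\varphi_\epsilon=v_\epsilon-\ell$ and using $a\ge\alpha$, this is at most
$$\frac{2}{\alpha(1-s)}\Big(F_\epsilon(v_\epsilon,Q_\rho^2)+F_\epsilon(\ell,Q_\rho^2)\Big).$$
By the $\theta_\epsilon$-version of Lemma \ref{l_change_of_boundary_data} together with \eqref{theta_bound}, we have $F_\epsilon(v_\epsilon,Q_\rho^2)\le C\rho^d$. By \eqref{affine_energy_strip}, $F_\epsilon(\ell,Q_\rho^2)\le C|z|^2\rho^d$.

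Combining the three steps gives $P_\epsilon\le C\beta\,\alpha^{-1}(C_0+|z|^2)\,M^{-2s}$, with a constant independent of $\epsilon$, $M$ and $N$. The step needing most care is Step 3. One must ensure that translating the cell decomposition by $\epsilon k$ does not create pairs straddling two periodic copies of $Q_\rho$ with nonzero contribution. This is harmless, because such pairs lie within distance $\sqrt d\,\epsilon<\rho/N$ of $\partial Q_\rho$, where $\varphi_\epsilon$ vanishes. One must also check that the bound inherited from Lemma \ref{l_change_of_boundary_data} is uniform in $N$. The hypothesis $M\ge4\sqrt d$ is only used to keep $M\epsilon$ above the cell diameter, so that $\{|x-y|\ge M\epsilon\}$ is genuinely long range compared with the cells.
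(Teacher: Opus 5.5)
Your proof is correct. Its skeleton is the paper's: a uniform bound on the normalized cell energy of $\overline\varphi_\epsilon$, turned into the oscillation bound $\int_{Q_{\epsilon,k}}|\overline\varphi_\epsilon-m_\epsilon|^2\le C\epsilon^{d+2s}/(1-s)$, then multiplied by the kernel tail $\int_{\{|h|\geq M\epsilon\}}|h|^{-d-2s}dh\sim(M\epsilon)^{-2s}$. Two steps are carried out differently.

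For the cell energy, the paper goes through $E^0_\epsilon(\overline\varphi_\epsilon)\le C$, i.e.\ Lemma~\ref{l_periodization} with $M=0$, and then strips off the affine part $\langle z,x-y\rangle$. That lemma carries a cutoff $|x-y|\le\rho/N$, a boundary term $T$, and the requirement $\sqrt d\,\epsilon\le\rho/N$. You apply the convexity/translation argument directly to the cell functional of $\varphi_\epsilon$ and split $\varphi_\epsilon=v_\epsilon-\ell$ only afterwards, which is more direct. The straddling issue you raise does not actually occur: since $Q_\rho$ is a union of $\epsilon$-cells and $\rho/\epsilon\in\N$, each translated cell $Q_{\epsilon,k-j}$ is exactly a $\rho\Z^d$-translate of a cell in $K$.

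For the long-range part, the paper moves $y$ back to $Q_{\epsilon,0}$ by $\epsilon$-periodicity, bounds the kernel on each pair of cells by $m_k$, and sums a lattice series; this is where $M\ge4\sqrt d$ enters. You instead subtract the common cell mean and use the splitting $|f(x)-f(y)|^2\le2|f(x)|^2+2|f(y)|^2$ from Lemma~\ref{l_2_large_interactions}, with $\|f(\cdot+h)\|_{L^2(Q_\rho)}=\|f\|_{L^2(Q_\rho)}$ handling $y\notin Q_\rho$. Since $\iint_{Q_{\epsilon,0}^2}|\overline\varphi_\epsilon(x)-\overline\varphi_\epsilon(y)|^2dx\,dy=2\epsilon^d\|\overline\varphi_\epsilon-m_\epsilon\|^2_{L^2(Q_{\epsilon,0})}$, the two estimates carry the same information. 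Yours, however, skips the lattice sum and, contrary to your closing remark, never uses $M\ge4\sqrt d$. It gives $P_\epsilon\le CM^{-2s}$ for every $M>0$.

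Finally, as in the paper, the bound $F_\epsilon(v_\epsilon,Q_\rho^2)\le C\rho^d$ for small $\epsilon$, uniform in $N$, comes from the proof of Lemma~\ref{l_change_of_boundary_data} rather than from its liminf statement. That proof settles the uniformity in $N$ you flag.
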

\begin{proof}
    By Lemma \ref{l_change_of_boundary_data} and Lemma \ref{l_periodization}, both applied with $M=0$, and by \eqref{blowup_energy_bound}, we have $E^0_\epsilon(\overline\varphi_\epsilon)\leq C$. Hence, by \eqref{periodic_energy_cell}, $(a+b)^2\leq 2a^2+2b^2$, \eqref{coercivity_boundedness_a} and \eqref{sphere_average}, for every $k\in K$ we obtain
    \[
        \frac{1-s}{\epsilon^d}\int_{Q_{\epsilon,k}}\int_{Q_{\epsilon,k}}\dfrac{|\overline\varphi_\epsilon(x)-\overline\varphi_\epsilon(y)|^2}{|x-y|^{d+2s}}dydx\leq\frac 2\alpha\left(E^0_\epsilon(\overline\varphi_\epsilon)+\beta\dfrac{\sigma_{d-1}}{2d}|z|^2\right)\leq C,
    \]
    where we also used that $\sqrt d\,\epsilon\leq\rho/N$ for $\epsilon$ small. Since $|x-y|^{-d-2s}\geq(\sqrt d\,\epsilon)^{-d-2s}$ for $x,y\in Q_{\epsilon,k}$, this gives
    \begin{equation}\label{cell_L2_bound}
        \int_{Q_{\epsilon,k}}\int_{Q_{\epsilon,k}}|\overline\varphi_\epsilon(x)-\overline\varphi_\epsilon(y)|^2dydx\leq C\dfrac{\epsilon^{2d+2s}}{1-s}.
    \end{equation}
    On the other hand, by \eqref{periodic_energy_cell} and by the $\epsilon$-periodicity of $\overline\varphi_\epsilon$, writing $y=y'+\epsilon k$ we get
    \begin{equation}\label{corrector_split}
        P_\epsilon\leq\frac{\beta(1-s)}{\epsilon^d}\sum_{k\in\Z^d}m_k\int_{Q_{\epsilon,0}}\int_{Q_{\epsilon,0}}|\overline\varphi_\epsilon(x)-\overline\varphi_\epsilon(y')|^2dy'dx,
    \end{equation}
    where
    \[
        m_k\coloneqq\sup\big\{|x-y|^{-d-2s}\mid x\in Q_{\epsilon,0},\ y\in Q_{\epsilon,k},\ |x-y|\geq M\epsilon\big\},
    \]
    with the convention that $m_k=0$, if the above set is empty. For $x\in Q_{\epsilon,0}$ and $y\in Q_{\epsilon,k}$, we have $\epsilon(|k|-\sqrt d)\leq|x-y|\leq\epsilon(|k|+\sqrt d)$, so that $m_k=0$ unless $|k|\geq M-\sqrt d\geq M/2\geq2\sqrt d$, and in that case $|x-y|\geq\epsilon|k|/2$, whence $m_k\leq\left(\epsilon|k|/2\right)^{-d-2s}$. Moreover, if $h\in Q_{1,k}$ and $|k|\geq2\sqrt d$, then $|h|\leq|k|+\sqrt d\leq2|k|$ and $|h|\geq|k|-\sqrt d\geq M/4$, so that
    \begin{eqnarray*}
        \sum_{|k|\geq M/2}\dfrac{1}{|k|^{d+2s}}&\leq& 2^{d+2s}\sum_{|k|\geq M/2}\int_{Q_{1,k}}\dfrac{dh}{|h|^{d+2s}}
        \leq 2^{d+2s}\int_{\{|h|\geq M/4\}}\dfrac{dh}{|h|^{d+2s}}\\&=&\dfrac{2^{d+2s}\sigma_{d-1}}{2s}\left(\dfrac M4\right)^{-2s}.        
    \end{eqnarray*}
    Therefore,
    \begin{equation}\label{kernel_sum_bound}
        \sum_{k\in\Z^d}m_k\leq\left(\dfrac2\epsilon\right)^{d+2s}\sum_{|k|\geq M/2}\dfrac{1}{|k|^{d+2s}}\leq\dfrac{C}{\epsilon^{d+2s}M^{2s}},
    \end{equation}
    Putting together \eqref{cell_L2_bound}, \eqref{corrector_split}  and \eqref{kernel_sum_bound}, the claim follows.
\end{proof}

We are finally in a position to estimate the density of $\mu$.

\begin{proposition}\label{p_liminf_long_range}
    For $\mathscr{L}^d$-almost every $x_0\in\Omega$,
    \begin{equation}\label{density_bound}
        \frac{d\mu}{d\mathscr{L}^d}(x_0)\geq(1-\lambda)\dfrac{\sigma_{d-1}}{2d}\,\overline a\,|\nabla u(x_0)|^2.
    \end{equation}
    Consequently,
    \begin{equation}\label{liminf_long_range}
        \liminf\limits_{\epsilon\to 0}F_\epsilon(u_\epsilon,A_{1,\epsilon})\geq(1-\lambda)\dfrac{\sigma_{d-1}}{2d}\,\overline a\,\|\nabla u\|^2_{L^2(\Omega)}.
    \end{equation}
\end{proposition}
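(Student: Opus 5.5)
The plan is to chain the blow-up reductions already established and then evaluate the resulting periodic long-range energy with Lemma \ref{l_averaging_large_scale}. Fix $x_0$ as in the construction above: a Lebesgue point for $u$ and $\nabla u$, a point of $L^2$-differentiability with $z=\nabla u(x_0)$, and a point of finite density for $\theta$. Choose $\rho=\rho(\epsilon)$ satisfying \eqref{rho_asymptotics}, \eqref{blowup_convergence} and \eqref{theta_bound}. Fix $N\ge2$ and $M\ge4\sqrt d$. Combining Lemma \ref{l_change_of_boundary_data} with Lemma \ref{l_periodization} gives
\[
\frac{d\mu}{d\mathscr L^d}(x_0)\ge \liminf_{\epsilon\to0}E^M_\epsilon(\overline\varphi_\epsilon)-\frac CN .
\]
So it suffices to bound $E^M_\epsilon(\overline\varphi_\epsilon)$ from below by the energy of the linear function alone.

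The first step expands the square in $E^M_\epsilon(\overline\varphi_\epsilon)$:
\[
|\langle z,h\rangle+\psi(x)-\psi(y)|^2\ge(1-\eta)|\langle z,h\rangle|^2-\Big(\tfrac1\eta-1\Big)|\psi(x)-\psi(y)|^2 ,
\]
valid for any $\eta\in(0,1)$. The correction term is then estimated by Lemma \ref{l_corrector_long_range}, which bounds it by $C/(\eta M^{2s})$. This gives
\[
E^M_\epsilon(\overline\varphi_\epsilon)\ge(1-\eta)\frac{1-s}{\rho^d}\int_{Q_\rho}\int_{\{M\epsilon\le|x-y|\le\rho/N\}}a\Big(\frac x\epsilon,\frac y\epsilon\Big)\frac{|\langle z,x-y\rangle|^2}{|x-y|^{d+2s}}dydx-\frac{C}{\eta M^{2s}} .
\]
A cleaner alternative would be to keep the cross term and show it vanishes by periodicity, but the Young-inequality route looks more robust.

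The second step applies Lemma \ref{l_averaging_large_scale} with $A=Q_\rho$, $r_1=M\epsilon$ and $r_2=\rho/N$. For $\epsilon$ small we have $r_1\ge\epsilon$ and $r_2\le1$. The main term of the lemma gives
\[
\overline a\,\frac{\sigma_{d-1}}{2d}|z|^2\Big((\rho/N)^{2(1-s)}-(M\epsilon)^{2(1-s)}\Big).
\]
The error, after dividing by $\rho^d$, is bounded by
\[
C\beta|z|^2\Big(\frac{1-s}{M}+\frac{|\partial_\epsilon Q_\rho|}{\rho^d}\Big),
\]
and $|\partial_\epsilon Q_\rho|/\rho^d\le C\epsilon/\rho\to0$. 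Since $s\to1$, the quantities $N^{-2(1-s)}$ and $M^{2(1-s)}$ tend to $1$. Moreover $\rho^{2(1-s)}\to1$ by \eqref{rho_asymptotics} and $\epsilon^{2(1-s)}\to\lambda$. Hence the main term converges to
\[
\overline a\,\frac{\sigma_{d-1}}{2d}(1-\lambda)|z|^2 .
\]

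The last step collects the errors:
\[
\frac{d\mu}{d\mathscr L^d}(x_0)\ge(1-\eta)(1-\lambda)\frac{\sigma_{d-1}}{2d}\,\overline a\,|z|^2-\frac{C}{\eta M^{2s}}-\frac CN .
\]
I will let $N\to\infty$, then $M\to\infty$, then $\eta\to0$. This requires checking that the constants are independent of $N$ and $M$, which is asserted in the preceding lemmas. Note that $\mu$, and hence its density, depends on $M$ through $A_{1,\epsilon}$. So $M$ is fixed beforehand, and the $M^{-2s}$ error must be handled carefully.

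I expect this dependence on $M$ to be the main obstacle. The remedy I plan is to use the monotonicity of $F_\epsilon(u_\epsilon,\{|x-y|\ge M\epsilon\})$ in $M$. For any $M'\ge M$, the energy at threshold $M$ dominates the energy at threshold $M'$. The blow-up argument is then run with $M'$ in $E^{M'}_\epsilon$, while the factor $M'^{2(1-s)}\to1$. This gives a bound with error $C/(\eta M'^{2s})$, and letting $M'\to\infty$ yields \eqref{density_bound} for the fixed $M$. Finally, \eqref{liminf_long_range} follows by integrating the density bound over $\Omega$, using $\liminf_\epsilon\mu_\epsilon(\Omega)\ge\mu(\Omega)$.
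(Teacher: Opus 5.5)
Your proposal is correct and follows the paper's proof: Lemma~\ref{l_change_of_boundary_data} and Lemma~\ref{l_periodization} reduce the problem to $E^M_\epsilon(\overline\varphi_\epsilon)$, Lemma~\ref{l_corrector_long_range} disposes of the corrector (your Young splitting with parameter $\eta$ plays the role of the paper's Cauchy--Schwarz bound $E^M_\epsilon(\overline\varphi_\epsilon)\geq E^M_\epsilon(0)-2\sqrt{E^M_\epsilon(0)P_\epsilon}$), and Lemma~\ref{l_averaging_large_scale} gives $\lim_{\epsilon\to0}E^M_\epsilon(0)=(1-\lambda)\frac{\sigma_{d-1}}{2d}\,\overline a\,|z|^2$. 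Your monotonicity device ($E^M_\epsilon\geq E^{M'}_\epsilon$ for $M'\geq M$, then $M'\to+\infty$) handles a point the paper glosses over: the paper simply lets $M\to+\infty$ even though $\mu$ depends on $M$ through $A_{1,\epsilon}$, whereas your version yields the density bound for each fixed $M>0$.
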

\begin{proof}
    Let $x_0$, $\rho$, $z$, and $\ell$ be as above, and fix $M\geq4\sqrt d$ and $N\geq2$. By Lemma \ref{l_change_of_boundary_data} and Lemma \ref{l_periodization},
    \begin{equation}\label{conclusion_step_1}
        \liminf\limits_{\epsilon\to 0}\frac{\mu_\epsilon(Q_\rho)}{\mathscr{L}^d(Q_\rho)}\geq\liminf\limits_{\epsilon\to 0}E^M_\epsilon(\overline\varphi_\epsilon)-\dfrac{C}{N}.
    \end{equation}
    Expanding the square in \eqref{periodic_energy} and using the Cauchy--Schwarz inequality, we get
    \begin{align*}
        E^M_\epsilon(\overline\varphi_\epsilon)&\geq\frac{1-s}{\rho^d}\int_{Q_\rho}\int_{\{M\epsilon\leq|x-y|\leq\rho/N\}}a\left(\frac x\epsilon,\frac y\epsilon\right)\dfrac{|\langle z,x-y\rangle|^2}{|x-y|^{d+2s}}dydx\\
        &\hphantom{\geq{}}-2\frac{1-s}{\rho^d}\int_{Q_\rho}\int_{\{M\epsilon\leq|x-y|\leq\rho/N\}}a\left(\frac x\epsilon,\frac y\epsilon\right)
        \dfrac{|\langle z,x-y\rangle|\,|\overline\varphi_\epsilon(x)-\overline\varphi_\epsilon(y)|}{|x-y|^{d+2s}}dydx\\
        &\geq E^M_\epsilon(0)-2\sqrt{E^M_\epsilon(0)P_\epsilon},
    \end{align*}
    where we discarded the nonnegative term $P_\epsilon$ and recognized $E^M_\epsilon(0)$ and $P_\epsilon$ as mixed terms. Hence, by Lemma \ref{l_corrector_long_range},
    \begin{equation}\label{conclusion_step_2}
        E^M_\epsilon(\overline\varphi_\epsilon)\geq E^M_\epsilon(0)-2\sqrt{E^M_\epsilon(0)P_\epsilon}\geq E^M_\epsilon(0)-\dfrac{C}{M^s},
    \end{equation}
    where we also used $E^M_\epsilon(0)\leq\beta\sigma_{d-1}|z|^2/(2d)$.

    It remains to compute the limit of $E^M_\epsilon(0)$. Applying Lemma \ref{l_averaging_large_scale} with $A=Q_\rho$, $\xi=z$, $r_1=M\epsilon$ and $r_2=\rho/N$, and using $|\partial_\epsilon Q_\rho|\leq C\rho^{d-1}\epsilon$, we get
    \[
        \Big|E^M_\epsilon(0)-\overline a\,\dfrac{\sigma_{d-1}}{2d}|z|^2\Big(\left(\dfrac\rho N\right)^{2(1-s)}-(M\epsilon)^{2(1-s)}\Big)\Big|\leq C\beta|z|^2\Big(\dfrac{1-s}M+\dfrac\epsilon\rho\Big),
    \]
    which vanishes as $\epsilon\to0$. Since $M$ and $N$ are fixed, $\left(\rho/N\right)^{2(1-s)}\to1$ and $(M\epsilon)^{2(1-s)}\to\lambda$, by \eqref{rho_asymptotics} and by the choice of $\lambda$. We obtain
    \begin{equation}\label{conclusion_step_3}
        \lim\limits_{\epsilon\to 0}E^M_\epsilon(0)=(1-\lambda)\dfrac{\sigma_{d-1}}{2d}\,\overline a\,|z|^2.
    \end{equation}
    Putting together \eqref{conclusion_step_1}, \eqref{conclusion_step_2} and \eqref{conclusion_step_3}, and recalling the choice of $\rho$, we obtain
    \[
        \frac{d\mu}{d\mathscr{L}^d}(x_0)\geq(1-\lambda)\dfrac{\sigma_{d-1}}{2d}\,\overline a\,|z|^2-\dfrac{C}{M^s}-\dfrac CN,
    \]
    with $C$ independent of $M$ and $N$. Letting $M\to+\infty$ and $N\to+\infty$ gives \eqref{density_bound}. Since
    \[
        \liminf\limits_{\epsilon\to 0}F_\epsilon(u_\epsilon,A_{1,\epsilon})=\liminf\limits_{\epsilon\to 0}\mu_\epsilon(\Omega)\geq\mu(\Omega)\geq\int_\Omega\frac{d\mu}{d\mathscr{L}^d}(x)dx.
    \]
     we obtain \eqref{liminf_long_range}.
\end{proof}

Putting together Lemma \ref{l_short_range} and Proposition \ref{p_liminf_long_range}, we conclude that
\begin{equation}\label{liminf_conclusion}
    \liminf\limits_{\epsilon\to 0}F_\epsilon(u_\epsilon)\geq\dfrac{\sigma_{d-1}}{2d}\Big((1-\lambda)\overline a\|\nabla u\|^2_{L^2(\Omega)}+\lambda F_{\rm hom}(u)\Big),
\end{equation}
which is the $\Gamma$-liminf inequality.

\section{Limsup inequality}\label{sec:limsup}
To prove the $\Gamma$-limsup inequality, 
it is sufficient to construct recovery sequences for piecewise-affine target functions; the general case then follows by a standard density argument. Moreover, we can suppose that $\Omega$ is connected.
Indeed, otherwise it is sufficient to make the construction in each connected component of $\Omega$ separately. Since different connected components have a strictly positive distance, the interactions of recovery sequences between different components are asymptotically negligible.

\smallskip
We consider a function $u\in H^1(\Omega)$ for which there exists a finite collection of $d$-simplices $\{\Delta_i\}_{i\in I}$, covering $\Omega$ and overlapping only on a $(d-1)$-common face, such that
\[
u(x)=\inp*{\xi_i}{x}+m_i
\]for $x\in\Delta_i\cap\Omega$ and some $m_i\in \mathbb{R}$, $\xi_i\in\mathbb{R}^d$. 
Note that such a $u$ 
is indeed Lipschitz continuous on the whole $\Omega$.

We fix $\delta>0$ and construct an approximate recovery sequence for $u$.
For each $i\in I$, consider a $1$-periodic function $\phi_i\in\mathcal{C}^{\infty}(\mathbb{R}^d)$, such that 
\begin{equation}\label{choice_phi_i}
    \int_{(0,1)^d} a(x,x)|\xi_i+\nabla \phi_i(x)|^2 dx \leq \inp*{A_{\rm hom}\xi_i}{\xi_i} + \delta.
\end{equation}
Moreover, for each $a>0$ and each $i\in I$, we define 
\begin{equation}
    \Delta_i^a\coloneqq\{x\in\Delta_i\cap\Omega\mid \mathrm{dist}(x,(\Delta_i)^c \cup\Omega^c)>a\}.
\end{equation}
Then, for each $\epsilon>0$, and for each $i\in I$, we choose a smooth function $\psi_{\epsilon,i}\in\mathcal{C}^{\infty}_c(\mathbb{R}^d)$, such that
\begin{equation}
    \label{properties of the cutoff}
    \1_{\Delta_i^{2\epsilon}} \leq \psi_{\epsilon,i}\leq \1_{\Delta_i^{\epsilon}}, \qquad \|\nabla \psi_{\epsilon,i}\|_\infty \leq 2/\epsilon, \qquad
        \|\nabla^2 \psi_{\epsilon,i}\|_\infty \leq 4/\epsilon^2.
\end{equation}
Finally, we define the approximate recovery sequence $(u_\epsilon)$ as
\begin{equation}\label{definition_recovery}
    u_\epsilon(x)\coloneqq u(x)+\epsilon\sum\limits_{i\in I}\psi_{\epsilon,i}(x)\phi_i\left(\frac x \epsilon\right),\qquad x\in\Omega.
\end{equation}

First of all, note that $u-u_\epsilon$ is in $\mathcal{C}^{\infty}(\Omega)$, and there exists $C>0$ such that $\|u-u_\epsilon\|_\infty\leq C\epsilon$ and $\|\nabla(u-u_\epsilon)\|_\infty\leq C$, so that $(u_\epsilon)\subset H^1(\Omega)$, $u_\epsilon\to u$ in $L^2(\Omega)$ and $(u_\epsilon)$ is bounded in $H^1(\Omega)$. 
We claim that 
\begin{equation}
    \limsup\limits_{\epsilon\to 0}F_\epsilon(u_\epsilon)\leq \frac{\sigma_{d-1}}{2d}\Big((1-\lambda)\overline{a}\|\nabla u\|_{L^2(\Omega)}^2+\lambda F_{\rm hom}(u)\Big)+C\delta,
\end{equation}
Before starting the computations, we divide the interactions in $\Omega\times\Omega$ into several regions, and we discuss which ones are negligible.

Consider two auxiliary parameters $r\,(\coloneqq r(\epsilon))$ and $\eta\,(\coloneqq \eta(\epsilon))$, satisfying 
\begin{align}
    \eta>0,\qquad \lim\limits_{\epsilon\to 0} \eta =0, \qquad \lim\limits_{\epsilon\to 0} \epsilon^\eta = 0,\quad
\hbox{ and }\quad    r > \epsilon, \qquad \lim\limits_{\epsilon\to 0} r =0, \qquad \lim\limits_{\epsilon\to 0}\frac{1-s}{r^2}=0.
\end{align}
Note that the last condition implies that $\lim\limits_{\epsilon\to 0} r^{2(1-s)} =1$.

For each $\epsilon>0$, we divide $\Omega^2$ into the following sets
\begin{align}
  \label{set_A} A_{1,\epsilon}&\coloneqq\left\{(x,y)\in\Omega^2\mid |x-y|\geq r\right\}\\
  \label{set_B} A_{2,\epsilon}&\coloneqq\left\{(x,y)\in\Omega^2\mid \epsilon<|x-y|<r,\, x\in\left(\cup_{i\in I}\Delta_i^{3r}\right)^c\right\}\\
  \label{set_C} A_{3,\epsilon}&\coloneqq\left\{(x,y)\in\Omega^2\mid \epsilon<|x-y|<r,\, x\in\cup_{i\in I}\Delta_i^{3r}\right\}\\
  \label{set_D} A_{4,\epsilon}&\coloneqq\left\{(x,y)\in\Omega^2\mid \epsilon^{1+\eta}\leq |x-y|\leq \epsilon\right\}\\
  \label{set_E} A_{5,\epsilon}&\coloneqq\left\{(x,y)\in\Omega^2\mid |x-y|<\epsilon^{1+\eta},\, x\in\cup_{i\in I}\Delta_i^{3\epsilon}\right\}\\
  \label{set_F} A_{6,\epsilon}&\coloneqq\left\{(x,y)\in\Omega^2\mid |x-y|<\epsilon^{1+\eta},\, x\in\left(\cup_{i\in I}\Delta_i^{3\epsilon}\right)^c\right\}.
\end{align}

Recalling the definition of the truncated functionals given in \eqref{truncatedfunctional}, we note that for all $\epsilon>0$ and all $v\in L^2(\Omega)$, it holds
\begin{equation}
    F_\epsilon(v)=\sum_{k=1}^6 F_{\epsilon}(v,A_{k,\epsilon}).
\end{equation}
We show that the contributions coming from sets $A_{1,\epsilon},A_{2,\epsilon},A_{4,\epsilon}$ and $A_{6,\epsilon}$ are negligible.
By Lemma \ref{l_2_large_interactions}, since $(u_\epsilon)$ is bounded in $L^2(\Omega)$, it follows that 
\begin{equation}\label{porzione_A1}
    \lim\limits_{\epsilon\to 0} F_{\epsilon}(u_\epsilon,A_{1,\epsilon})=0.
\end{equation}
By construction, the sequence $(u_\epsilon)$ is uniformly Lipschitz. Moreover,
\[
A_{2,\epsilon}\cup A_{6,\epsilon}\subset \Big\{(x,y)\in \Omega^2\mid x\in\Big(\bigcup_{i\in I}\Delta_i^{3r}\Big)^c\Big\},
\]
and
    $\big|\big(\bigcup_{i\in I}\Delta_i^{3r}\big)^c\big| \to 0$ as $\epsilon \to 0$.
We then apply Lemma \ref{w_1_infty_close_interactions} and obtain
\begin{equation}\label{porzione_A2_A6}
    \lim\limits_{\epsilon \to 0} F_{\epsilon}(u_\epsilon,A_{2,\epsilon})=\lim\limits_{\epsilon \to 0} F_{\epsilon}(u_\epsilon,A_{6,\epsilon})=0.
\end{equation}

To deal with the contribution coming from $A_{4,\epsilon}$, we use $\|\nabla u_\epsilon\|_{L^2(\Omega)} \leq M$ and apply Theorem \ref{BBM_theorem1}, with 
\begin{equation}
    \rho_\epsilon(z)=\dfrac{(1-s)}{|z|^{d+2s-2}} \1_{\{\epsilon^{1+\eta}\leq |z|\leq \epsilon\}}(z).
\end{equation}
We find that there exists $C=C(\Omega)>0$ such that 
\begin{align}
    F_\epsilon(u_\epsilon,A_{4,\epsilon})&\leq C \|\nabla u_\epsilon\|_{L^2(\Omega)}^2\|\rho_\epsilon\|_{L^1(\R^d)} 
    = \frac{CM \sigma_{d-1}}{2} \epsilon^{2(1-s)}(1-\epsilon^{2\eta(1-s)}) \\
    &\leq \frac{CM \sigma_{d-1}}{2} \min \{\epsilon^{2(1-s)},1-\epsilon^{2\eta(1-s)}\},
\end{align}
where we used $0 \leq \epsilon^{2(1-s)},1-\epsilon^{2\eta(1-s)} \leq 1$ for every $\epsilon > 0$.
Recalling that $\epsilon^{2(1-s)} \to \lambda \in [0,1]$, we obtain
\begin{equation}\label{porzione_A4}
    \limsup\limits_{\epsilon \to 0}F_\epsilon(u_\epsilon,A_{4,\epsilon}) \leq \limsup\limits_{\epsilon \to 0} \frac{CM \sigma_{d-1}}{2} \min 
    \big\{\epsilon^{2(1-s)},1-\epsilon^{2\eta(1-s)}\big\} =0
\end{equation}

We now turn to the relevant contributions. We first note that
\[
    |u_\epsilon(x)-u_\epsilon(y)|^2 = \left|\inp*{\xi_i}{x-y} +\epsilon \left(\phi_i\left(\frac x \epsilon\right)-\phi_i\left(\frac y \epsilon\right) \right)\right|^2,
\]
for $(x,y) \in A_{3,\epsilon}$. Using that
    $(a+b)^2 \leq (1+\gamma)a^2+\left(1+1/\gamma \right)b^2$ for $\gamma>0$, we obtain that
\[
    F_\epsilon(u_\epsilon,A_{3,\epsilon}) \leq (1+\gamma)F_\epsilon(u,A_{3,\epsilon})+\Big(1+\frac{1}{\gamma} \Big)F_\epsilon\left(\epsilon \phi_i\left(\frac \cdot \epsilon\right), A_{3,\epsilon} \right).
\]
By Lemma \ref{l_infty_large_interactions}, using that $\|\epsilon \phi_i\left(\cdot/ \epsilon\right) \|_\infty\leq C\epsilon$ for some $C > 0$ and for every $i \in I$, it follows that
\[
    \limsup\limits_{\epsilon \to 0} F_\epsilon(u_\epsilon,A_{3,\epsilon}) \leq (1+\gamma)\limsup\limits_{\epsilon \to 0}F_\epsilon(u,A_{3,\epsilon}).
\]
By the arbitrariness of $\gamma > 0$, we then conclude that 
\[
    \limsup\limits_{\epsilon \to 0} F_\epsilon(u_\epsilon,A_{3,\epsilon})\leq \limsup\limits_{\epsilon \to 0}F_\epsilon(u,A_{3,\epsilon}).
\]
We proceed with the explicit computation of the right-hand side of the previous equation. 
If $x\in\Delta_i^{3r}$ and $|x-y|<r$, then $y\in\Delta_i\cap\Omega$ and $u(x)-u(y)=\inp*{\xi_i}{x-y}$. Hence, applying Lemma \ref{l_averaging_large_scale} with $A=\Delta_i^{3r}$, $\xi=\xi_i$, $r_1=\epsilon$ and $r_2=r$, we get
\begin{align}
    F_\epsilon(u,A_{3,\epsilon}) &= (1-s)\sum_{i\in I}\int_{\Delta_i^{3r}}\int_{\{\epsilon<|x-y|<r\}} a\left(\frac{x}\epsilon,\frac{y}\epsilon\right)\dfrac{|\inp*{\xi_i}{x-y}|^2}{|x-y|^{d+2s}}dydx \\
    &= \dfrac{\sigma_{d-1}}{2d} (r^{2(1-s)}-\epsilon^{2(1-s)})\overline{a}\sum_{i \in I} |\Delta_i^{3r}|\,|\xi_i|^2+o_\epsilon(1),
\end{align}
where we used that the error term in \eqref{averaging_estimate} is bounded by
\[
    C\beta|\xi_i|^2\left((1-s)|\Delta_i^{3r}|+|\partial_\epsilon\Delta_i^{3r}|\right),
\]
which vanishes as $\epsilon\to0$, uniformly in $r$. Indeed, $1-s\to0$ and, setting $\delta_i(x)\coloneqq\mathrm{dist}\left(x,(\Delta_i)^c\cup\Omega^c\right)$, the function $\delta_i$ is $1$-Lipschitz, and therefore we have
\[
    \partial_\epsilon\Delta_i^{3r}\subset\left\{x\in\R^d\mid |\delta_i(x)-3r|\leq \sqrt d\,\epsilon\right\}.
\]
Since $\Delta_i\cap\Omega$ has Lipschitz boundary, there exist $C,t_0>0$ such that $|\{a<\delta_i\leq b\}|\leq C(b-a)$ for every $0\leq a<b\leq t_0$, so that $|\partial_\epsilon\Delta_i^{3r}|\leq 2\sqrt d\,C\epsilon$, for every $\epsilon$ small enough.
Therefore, it follows that 
\begin{align}
    \limsup\limits_{\epsilon \to 0}F_\epsilon(u_\epsilon,A_{3,\epsilon})&\leq \limsup\limits_{\epsilon \to 0}F_\epsilon(u,A_{3,\epsilon})\\ &=\dfrac{\sigma_{d-1}}{2d}\limsup\limits_{\epsilon \to 0} (r^{2(1-s)}-\epsilon^{2(1-s)})\overline{a}\sum_{i \in I} |\Delta_i^{3r}|\,|\xi_i|^2 \\
    &= \dfrac{\sigma_{d-1}}{2d}(1-\lambda) \overline{a} \sum_{i \in I} |\Delta_i \cap \Omega|\,|\xi_i|^2 \\
   \label{porzione_A3} &= \dfrac{\sigma_{d-1}}{2d}(1-\lambda) \overline{a} \|\nabla u\|_{L^2(\Omega)}^2.
\end{align}
We are left with the contribution coming from $A_{5,\epsilon}$. First, we substitute the periodic function $a(x,y)$ with its diagonal counterpart $a(x,x)$. To do so, we choose a modulus of continuity $\omega$ for $a$; namely, an increasing function $\omega : \R^+ \rightarrow \R^+$ such that  $\lim_{t \to 0} \omega(t) = \omega(0) = 0$,
for which
\[
    |a(x,y)-a(z,w)|\leq \omega(|(x,y)-(z,w)|),
\]
for every $(x,y),(z,w) \in \R^{2d}$. Now
\begin{align}
     F_\epsilon(u_\epsilon, A_{5,\epsilon}) &= (1-s)\int_{A_{5,\epsilon}} a\left(\frac{x}\epsilon,\frac{x}\epsilon\right)
\frac{|u_\epsilon(x)-u_\epsilon(y)|^2}{|x-y|^{d+2s}}dxdy  \\
 &\ \qquad + (1-s)\int_{A_{5,\epsilon}} \left[a\left(\frac{x}\epsilon,\frac{y}\epsilon\right)-a\left(\frac{x}\epsilon,\frac{x}\epsilon\right)\right]
\frac{|u_\epsilon(x)-u_\epsilon(y)|^2}{|x-y|^{d+2s}}dxdy\\
&=:I_{1,\epsilon}+I_{2,\epsilon}.
\end{align}

We estimate $I_{2,\epsilon}$ from above, applying Theorem \ref{BBM_theorem1} with
\[
    \rho_\epsilon(z)=\dfrac{(1-s)}{|z|^{d+2s-2}} \1_{\{|z|\leq 1\}}(z),
\]
obtaining
\begin{align}
 I_{2,\epsilon}&\leq (1-s)\int_{A_{5,\epsilon}} \omega\left(\dfrac{|x-y|}{\epsilon}\right)
\dfrac{|u_\epsilon(x)-u_\epsilon(y)|^2}{|x-y|^{d+2s}}dxdy \\
&\leq \omega(\epsilon^\eta)(1-s) \int_{A_{5,\epsilon}}\dfrac{|u_\epsilon(x)-u_\epsilon(y)|^2}{|x-y|^{d+2s}}dxdy \\
\label{modulo_di_continuita}&\leq \omega(\epsilon^\eta)\|\nabla u_\epsilon\|_{L^2(\Omega)}^2 \|\rho_\epsilon\|_{L^1(\R^d)} \leq C \omega(\epsilon^\eta),
\end{align}
where we used that $(u_\epsilon)$ is bounded in $H^1(\Omega)$ and that $(\rho_\epsilon)$ is bounded in $L^1(\R^d)$, by the usual integration in polar coordinates. 

As for $I_{1,\epsilon}$, for each $\epsilon>0$ and each $i\in I$, we use the first-order Taylor expansion of $\epsilon\phi_i(\cdot/\epsilon)$, that is
\begin{equation}
    \epsilon \phi_i\left(\frac y \epsilon\right)-\epsilon \phi_i\left(\frac x \epsilon\right)=\inp*{\nabla\phi_i\left(\frac x\epsilon\right)}{x-y}+R_{\epsilon,i}\left(x,y\right),
\end{equation}
with 
\begin{equation}
    |R_{\epsilon,i}\left(x,y\right)|\leq \max_{i\in I}\|\nabla^2\phi_i\|_\infty\frac{|x-y|^2}{\epsilon}.
\end{equation}
Using again $(a+b)^2 \leq (1+\gamma)a^2+\left(1+1/\gamma \right)b^2$, for $\gamma>0$ and that 
\[
A_{5,\epsilon}\subset\cup_{i\in I}\big(\Delta_i^{3\epsilon}\times \Delta_i^{2\epsilon}\big),\] 
it follows that
\begin{align}
  I_{1,\epsilon} &\leq (1+\gamma)(1-s)\sum_{i\in I}\int_{\Delta_i^{3\epsilon}}\int_{\Delta_i^{2\epsilon}}a\left(\frac x\epsilon,\frac x \epsilon\right)\dfrac{\left|\inp*{\xi_i+\nabla\phi_i\left(\frac x\epsilon\right)}{x-y}\right|^2}{|x-y|^{d+2s}}dydx\\
&\ \ \qquad +\Big(1+\frac 1\gamma\Big)\max_{i\in I}\|\nabla^2\phi_i\|_\infty\frac{1-s}{\epsilon^2}\int_{A_{5,\epsilon}}a\left(\frac x\epsilon,\frac x\epsilon\right)\dfrac{1}{|x-y|^{d+2s-4}}dxdy\\
&=:(1+\gamma)I_{3,\epsilon}+\Big(1+\frac 1\gamma\Big)I_{4,\epsilon}.
\end{align}
Continuing the computation, integrating in polar coordinates, we obtain
\begin{align}
    I_{4,\epsilon}&\leq\max_{i\in I}\|\nabla^2\phi_i\|_\infty\frac{\beta(1-s)}{\epsilon^2}\int_{\Omega}\int_{\mathbb{S}^{d-1}}\int_0^{\epsilon^{1+\eta}}t^{3-2s}dt d\sigma(\nu)dx\\
    &\leq\max_{i\in I}\|\nabla^2\phi_i\|_\infty\frac{\beta(1-s)\sigma_{d-1}|\Omega|}{2(4-2s)}\epsilon^{(1+\eta)(4-2s)-2}\\
   \label{o_piccolo_1} &\leq C(1-s)\epsilon^{(1+\eta)(4-2s)-2}=o_\epsilon(1),
\end{align}
since $1-s\to 0$ and $\epsilon^{(1+\eta)(4-2s)-2}$ is bounded, because $(1+\eta)(4-2s)-2>0$.

Regarding the term $I_{3,\epsilon}$, exploiting \eqref{sphere_average} and integrating in polar coordinates, we obtain
\begin{align}
    I_{3,\epsilon}&=(1-s)\sum_{i\in I}\int_{\Delta_i^{3\epsilon}}a\left(\frac x\epsilon,\frac x \epsilon\right)\int_{\mathbb{S}^{d-1}}\left|\inp*{\xi_i+\nabla\phi_i\left(\frac x\epsilon\right)}{\nu}\right|^2\int_0^{\epsilon}t^{1-2s}dt d\sigma(\nu)dx\\
    &=\frac{\sigma_{d-1}}{2d}\epsilon^{2(1-s)}\sum_{i\in I}\int_{\Delta_i^{3\epsilon}}a\left(\frac x\epsilon,\frac x\epsilon\right)\left|\xi_i+\nabla\phi_i\left(\frac x\epsilon\right)\right|^2 dx\\
   \label{pezzociccio} &\leq\frac{\sigma_{d-1}}{2d}\epsilon^{2(1-s)}\sum_{i\in I}|\Delta_i\cap\Omega|\,\left(\inp{A_{\rm hom}\xi_i}{\xi_i}+\delta\right).
\end{align}
Therefore, taking into account \eqref{modulo_di_continuita}, \eqref{o_piccolo_1}, \eqref{pezzociccio} and the choice of $\phi_i$ (see \eqref{choice_phi_i}), we obtain
\begin{align}
    \limsup\limits_{\epsilon\to 0}F_\epsilon(u_\epsilon,A_{5,\epsilon})&\leq \limsup\limits_{\epsilon\to 0}I_{1,\epsilon}+\limsup\limits_{\epsilon\to 0}I_{2,\epsilon}\\
    &\leq (1+\gamma)\limsup\limits_{\epsilon\to 0}I_{3,\epsilon}+\left(1+\frac 1 \gamma\right)\limsup\limits_{\epsilon\to 0}I_{4,\epsilon}+C\limsup\limits_{\epsilon\to 0}\omega(\epsilon^{\eta})\\
    &\leq (1+\gamma)\frac{\sigma_{d-1}}{2d}\lambda\left(F_{\rm hom}(u)+\delta|\Omega|\right).
\end{align}
where we used that $\epsilon^{\eta}\to 0$ and $\lim_{t\to 0^+}\omega(t)=0$.
Taking the infimum over $\gamma>0$, we obtain 
\begin{equation}\label{porzione_A5}
    \limsup\limits_{\epsilon\to 0}F_\epsilon(u_\epsilon,A_{5,\epsilon})\leq \frac{\sigma_{d-1}}{2d}\lambda\left(F_{\rm hom}(u)+\delta|\Omega|\right). 
\end{equation}
Gathering all the information from equations \eqref{porzione_A1}, \eqref{porzione_A2_A6}, \eqref{porzione_A4}, \eqref{porzione_A3}, and \eqref{porzione_A5}, we obtain
\begin{align}
    \limsup\limits_{\epsilon\to 0}F_\epsilon(u_\epsilon)&\leq \sum_{k=1}^6 \limsup\limits_{\epsilon\to 0}F_\epsilon(u_\epsilon,A_{k,\epsilon})\\
    &=\limsup\limits_{\epsilon\to 0}F_\epsilon(u_\epsilon,A_{3,\epsilon})+\limsup\limits_{\epsilon\to 0}F_\epsilon(u_\epsilon,A_{5,\epsilon})\\
    &\leq \dfrac{\sigma_{d-1}}{2d}(1-\lambda) \overline{a} \|\nabla u\|_{L^2(\Omega)}^2 + \frac{\sigma_{d-1}}{2d}\lambda F_{\rm hom}(u)+C\delta, 
\end{align}
where $C>0$ is some fixed constant, depending only on $d$, $\beta$ and $|\Omega|$. Repeating this construction for a sequence of $\delta\to0$ and using a diagonal argument, we obtain a recovery sequence for $u$, as desired.

Since $\Omega$ is an extension domain, any function $u\in H^1(\Omega)$ admits an extension $\overline{u}\in H^1_0(B_R(0))$, for some fixed positive radius $R>0$. Moreover, piecewise-affine functions are dense in $H^1_0(B_R(0))$ and hence, by restriction, they are also dense in $H^1(\Omega)$. As a consequence, since the functional 
\begin{equation}
    F_\lambda(u)=\dfrac{\sigma_{d-1}}{2d}\left((1-\lambda) \overline{a} \|\nabla u\|_{L^2(\Omega)}^2 + \lambda F_{\rm hom}(u)\right)
\end{equation}
is continuous in the strong topology of $H^1(\Omega)$, the $\Gamma$-$\limsup$ inequality follows as we proved it for a strongly dense subset of $H^1(\Omega)$.

\bigskip\noindent
{\bf Acknowledgements.} 
 Andrea Braides acknowledges funding by the Excellence Project MatMod@TOV (Grant Agreement No.~CUP\_E83C23000330006) of the
Department of Mathematics of the University of Rome Tor Vergata. The authors thank Giampiero Palatucci for valuable remarks.

\baselineskip=12.6pt
\bibliographystyle{abbrv}

\end{document}